\newtheorem{theorem}{\bf Theorem}[section]
\newtheorem{proposition}[theorem]{\bf Proposition}
\newtheorem{lemma}[theorem]{\bf Lemma}
\newtheorem{corollary}[theorem]{\bf Corollary}
\theoremstyle{definition}
\newtheorem{definition}[theorem]{\bf Definition}
\newtheorem*{definition*}{\bf Definition}
\newtheorem{remark}[theorem]{\bf Remark}
\newtheorem{example}[theorem]{\bf Example}
\newtheorem{conjecture}[theorem]{\bf Conjecture}
\renewcommand{\descriptionlabel}[1]%
     {\hspace{\labelsep}\textsf{#1}}
\newcommand		{\p}[1]	{\left(#1\right)}
\newcommand		{\abs}[1]{\left|#1\right|}
\newcommand{\T} {\mathbb T}
\newcommand{\R} {\mathbb R}
\newcommand{\C} {\mathbb C}
\newcommand{\Z} {\mathbb Z}
\newcommand{\G} {\mathbf G}
\renewcommand{\H} {\mathbf H}
\newcommand{\N} {\mathbf N}
\newcommand{\V} {\mathbf V}
\newcommand{\E} {\mathbf E}
\newcommand{\J} {\mathbf J}
\newcommand{\K} {\mathbf K}
\DeclareTextFontCommand{\emph}{\bfseries\em}
\begin{document}

\setcounter{MaxMatrixCols}{12}

\title[Kuramoto Networks with Infinitely Many Stable Equilibria]
{Kuramoto Networks with \\Infinitely Many Stable Equilibria}

\author[Davide Sclosa]{Davide Sclosa$^1$}

\footnotetext[1]{Mathematics Department, Vrije Universiteit Amsterdam;
e-mail:\hfill{\mbox{}}\\\hbox{d.sclosa@vu.nl}}


\begin{abstract}
We prove that the Kuramoto model on a graph can
contain infinitely many non-equivalent stable equilibria.
More precisely, we prove that for every~$d\geq 1$ there is a connected graph
such that the set of stable equilibria contains a manifold of dimension~$d$.
In particular, we solve a conjecture of R.~Delabays, T.~Coletta and P.~Jacquod
about the number of equilibria on planar graphs.
Our results are based on the analysis of balanced configurations, which
correspond to equilateral polygon linkages in topology.
In order to analyze the stability of manifolds of equilibria
we apply topological bifurcation theory.
\end{abstract}

\maketitle

\section{Introduction.}
Consider a connected graph~$\G$ with vertices~$1,\ldots,n$ and
to each vertex~$j$ associate a \emph{phase}~$\theta_j$
in the $1$-dimensional torus~$\T=\R/2\pi\Z$.
Let~$\mathbf N(j)$ denote the set of neighbors of~$j$ and consider
the coupled dynamical system
\begin{equation} \label{eq:main}
	\dot \theta_j = \sum_{k\in \mathbf N(j)} \sin(\theta_k - \theta_j),
	\qquad \text{for all } j=1,\ldots,n.
\end{equation}

In the paper \emph{stable} always means Lyapunov stable.
For every graph~$\G$ the synchronized state, in which all the phases are equal,
is a stable equilibrium. It is known that other stable equilibria are present in
cycles~\cite{canale2009, Wiley2006},
planar graphs~\cite{delabays2017multistability},
sparse graphs~\cite{sokolov2019sync},
$3$-regular graphs~\cite{DeVille2016}.
Two equilibria are \emph{equivalent} if they differ by a constant.
The number of non-equivalent stable equilibria is typically understood to be finite,
and explicit bounds are known in some cases~\cite{delabays2017multistability}.

On the other hand, it is known that some graphs support infinitely many
non-equivalent unstable equilibria.
Indeed, unstable equilibria form a manifold with singularities in the case of
complete graphs~\cite{brown2003globally, watanabe1997stability, ashwin2016identical}.

This leads to the question motivating the paper:
is the number of non-equivalent stable equilibria on a connected graph 
always finite? Can stable equilibria form a manifold?
Our main result answers these question:

\begin{theorem} \label{thm:main}
For every~$d\geq 1$ there is a connected
graph~$\G$ such that~\eqref{eq:main} contains a manifold of stable equilibria
of dimension~$d$.
\end{theorem}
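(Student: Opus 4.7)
The plan is to realize the required $d$-dimensional family of equilibria as a space of \emph{balanced configurations}, i.e.\ phase assignments satisfying the stronger pointwise condition
\[
\sum_{k\in\N(j)} e^{i\theta_k} = 0 \qquad\text{for every vertex } j.
\]
Rewriting the Kuramoto field as $\dot\theta_j = \mathrm{Im}\bigl(e^{-i\theta_j}\sum_{k\in\N(j)} e^{i\theta_k}\bigr)$ makes it clear that every balanced configuration is an equilibrium of~\eqref{eq:main}. Geometrically, balance at $j$ asserts that the neighbor phases, viewed as unit vectors in $\R^2$, close up to form an equilateral polygon; this is the polygon-linkage correspondence announced in the abstract.

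The first step is to design a connected graph $\G$ whose set of balanced configurations, modulo the global rotation $\theta\mapsto\theta+c$, is a smooth manifold of dimension $d$. The polygon-linkage picture makes this plausible, since the moduli space of closed equilateral $n$-gons in the plane grows linearly in $n$. A natural candidate is the complete bipartite graph $K_{2,n}$ with centers $v_1,v_2$ and peripheral vertices $w_1,\dots,w_n$: balance at each $w_i$ reduces to $e^{i\theta_{v_1}}+e^{i\theta_{v_2}}=0$, forcing the two centers to be antipodal, while balance at each $v_j$ becomes the single closure $\sum_i e^{i\theta_{w_i}} = 0$. A careful dimension count (the closure has real rank two at non-degenerate polygons, and the antipodality condition has real rank one) together with the implicit function theorem produces, for $n=d+2$ and away from collinear configurations, a smooth manifold of balanced equilibria of dimension $d$ after quotienting by the global rotation. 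If stability obstructions preclude $K_{2,n}$ itself, each $w_i$ can be decorated with a small rigid balancing gadget without changing the dimension count, giving a family of graphs to choose from.

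The main obstacle --- and the reason topological bifurcation theory enters --- is showing that this manifold consists of Lyapunov-\emph{stable} equilibria. Since~\eqref{eq:main} is the gradient flow of the potential $W=\sum_{jk\in E}\cos(\theta_k-\theta_j)$, stability amounts to the Hessian of $W$ being negative semidefinite with kernel exactly the tangent space of the manifold (enlarged by the rotation direction). The tangent directions automatically lie in the kernel, so the task reduces to controlling the sign of the Hessian on the normal bundle.

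The plan for this final step is first to establish negative definiteness of the normal Hessian at a single highly symmetric balanced configuration --- for instance the regular $(d+2)$-gon of peripheral phases --- via an explicit eigenvalue computation exploiting the dihedral symmetry to block-diagonalize the Hessian. Negative definiteness is then propagated along the moduli space by a continuation/topological-bifurcation argument: a sign change of a normal eigenvalue along a connected component of balanced configurations would force a bifurcation of the equilibrium set that is incompatible with the smooth manifold structure identified in the previous step, and any such crossing can be ruled out on an open neighborhood of the symmetric point. Restricting to this neighborhood (itself a $d$-dimensional submanifold) yields the manifold of stable equilibria required by Theorem~\ref{thm:main}.
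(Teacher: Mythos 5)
Your plan has a structural flaw that no amount of care in the dimension count or the continuation argument can repair: you impose balance of the \emph{full} neighborhood~$\N(j)$ at \emph{every} vertex~$j$. By Lemma~\ref{lem:basic}, equation~\eqref{eq:cosine}, balance of~$\N(j)$ forces~$\sum_{k\in\N(j)}\cos(\theta_k-\theta_j)=0$, which by~\eqref{eq:jacobian} says precisely that the~$j$-th diagonal entry of the Jacobian vanishes. If this happens at every vertex, the Jacobian (equivalently, minus the Hessian of the energy~\eqref{eq:energy}) has zero trace. In a real-analytic gradient system Lyapunov stability is equivalent to being a local minimum of the energy, so at a stable equilibrium the Hessian must be positive semidefinite; a positive semidefinite symmetric matrix with zero trace is the zero matrix. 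Consequently the ``normal Hessian'' you intend to prove negative definite can never be negative definite at an everywhere-balanced configuration: either some transverse eigenvalue is positive for the energy (a direction of decrease, hence instability), or the Jacobian vanishes identically, which forces every edge to have phase difference~$\pm\pi/2$ and leaves the linearization with no information whatsoever. Your concrete candidate confirms this: $K_{2,n}$ is complete bipartite, and for complete bipartite graphs the synchronized states are the only stable equilibria (see Section~\ref{sec:bipartite} and Proposition~\ref{prop:bipartite}); the balanced configurations you describe are exactly the high-dimensional manifolds of \emph{unstable} equilibria of case~\ref{prop:bipartite:1} there. Decorating the peripheral vertices with ``rigid balancing gadgets'' does not escape the trace obstruction as long as every neighborhood remains balanced.

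The fix, which is the route the paper takes, is to balance only the \emph{cross}-neighborhoods between sub-networks while keeping each sub-network internally unbalanced and internally stable. Concretely, one takes~$d$ disjoint~$6$-cycles in splay states~$(2k\pi/6+\alpha_i)_k$: within a cycle each vertex sees two neighbors at phase difference~$\pm\pi/3$, so its diagonal Jacobian entry is~$-1$ and the cycle's Jacobian block~\eqref{eq:circulant_matrix} is negative semidefinite with a one-dimensional kernel. The cycles are then joined only through antipodal (hence balanced) pairs of vertices, so Lemma~\ref{lem:balanced} yields a~$d$-torus of equilibria, and at the parameter values where all inter-cycle phase differences equal~$\pi/2$ the coupling terms drop out of the Jacobian, giving exactly~$d$ zero eigenvalues (the tangent directions) and all others strictly negative; Shoshitaishvili's theorem then upgrades this to Lyapunov stability on an open subset of the torus. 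You would need to restructure your argument along these lines --- balanced interfaces between internally stable, internally non-balanced pieces --- rather than seeking stability inside the space of globally balanced configurations.
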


In~\cite{delabays2017multistability} R.~Delabays, T.~Coletta and P.~Jacquod
conjecture an upper bound on the number of non-equivalent stable equilibria
in connected, planar graphs. In this paper we show that no such bound is possible,
and in fact a connected,
planar graph can support infinitely many non-equivalent stable equilibria:

\begin{corollary}
The graph of Figure~\ref{fig:main} supports infinitely many non-equivalent
stable equilibria.
\end{corollary}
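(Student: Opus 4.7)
The plan is to derive the corollary from Theorem~\ref{thm:main} applied to the specific planar graph $\G$ in Figure~\ref{fig:main}, with dimension parameter $d \geq 2$, and then extract infinitely many non-equivalent stable equilibria from the resulting manifold.

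First, I would verify that $\G$ realises the construction underlying Theorem~\ref{thm:main} for some $d \geq 2$. The abstract explains that the construction analyses balanced configurations, corresponding to equilateral polygon linkages with positive-dimensional moduli. I would identify in Figure~\ref{fig:main} the cycle(s) carrying the polygon linkage, argue that the number of vertices and the edge weights are chosen so that the moduli space of equilateral configurations has dimension at least $2$, and check that the remaining edges of $\G$ (added to ensure connectedness and planarity) do not rigidify this moduli space. Invoking the stability half of the proof of Theorem~\ref{thm:main}, which proceeds via topological bifurcation theory, one obtains a manifold $M \subseteq \T^n$ of stable equilibria of $\G$ of dimension $d \geq 2$.

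Second, I would pass from $M$ to uncountably many non-equivalent equilibria. Two equilibria $\theta, \theta'$ are equivalent iff $\theta' = \theta + c\,\mathbf{1}$ for some $c \in \T$, so equivalence is the orbit relation of the diagonal $\T$-action on $\T^n$. This $\T$-action is a symmetry of~\eqref{eq:main}, hence preserves $M$, and it is free away from the fully synchronised point (an isolated fixed point of the action). Since $\dim M \geq 2$, the quotient $M/\T$ is a manifold of dimension at least $1$, hence uncountable; therefore $\G$ supports uncountably many, in particular infinitely many, non-equivalent stable equilibria.

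The main obstacle is producing a planar embedding compatible with a sufficiently flexible polygon linkage. Theorem~\ref{thm:main} is stated for arbitrary connected graphs, so a priori its construction might require a non-planar $\G$; the essential content of the corollary is that the construction can be drawn in the plane. Confirming this requires reading off from Figure~\ref{fig:main} (i) the planar embedding, (ii) the underlying polygon-linkage substructure of dimension at least $2$, and (iii) that nothing in the bifurcation-theoretic stability argument of Theorem~\ref{thm:main} is disrupted by the planarity constraint. I expect step (ii) to be the most delicate, since planarity limits how multiple independent flexes can be combined within a single graph.
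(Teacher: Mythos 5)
Your overall route coincides with the paper's: the eye graph of Figure~\ref{fig:main} is precisely the graph $\G_2$ built in the proof of Theorem~\ref{thm:main}, so the corollary is the case $d=2$ of that proof plus the observation that a $2$-dimensional manifold of stable equilibria meets infinitely many orbits of the diagonal $\T$-action and therefore contains infinitely many non-equivalent stable equilibria. Your final quotient step is essentially right, with one small correction: the diagonal action on $\T^n$ is free everywhere, including at the synchronized state (adding $c\neq 0$ to all phases moves that point too), so there is no fixed point to exclude; moreover in the paper's construction the stable open set $V\subseteq\Gamma$ can be taken phase-shift invariant because stability depends only on the difference $\beta-\alpha$.

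The genuine problem is in your middle step. You propose to locate in the figure ``the cycle(s) carrying the polygon linkage'' and to argue that ``the moduli space of equilateral configurations has dimension at least $2$.'' That is not the mechanism, and carrying out that plan would fail: each $6$-cycle carries a single rigid splay state $(2k\pi/6)_{k=1,\ldots,6}$, not a flexible linkage, and no positive-dimensional moduli space of balanced configurations enters the argument. The two dimensions of the torus $\Gamma$ come from Lemma~\ref{lem:balanced}: the two $6$-cycles interact only through edges joining antipodal vertex pairs, each such pair is a balanced set (its two phases differ by $\pi$), so the splay states of the two cycles may be phase-shifted independently by $\alpha_1,\alpha_2\in\T$ while remaining an equilibrium. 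Stability is likewise not a consequence of linkage flexibility; it is verified by computing the Jacobian~\eqref{eq:jacobian} at $\alpha_2-\alpha_1=\pi/2$, where every cross-edge contributes $\cos(\pi/2)=0$, making the Jacobian block diagonal with two circulant $6$-cycle blocks, hence exactly two zero eigenvalues (tangent to $\Gamma$) and all others strictly negative; Shoshitaishvili's theorem then gives Lyapunov stability on an open subset of $\Gamma$. Note also that the graph is unweighted, so there are no edge weights to tune. Once your step (ii) is replaced by this balanced-pair decoupling, the rest of your argument goes through.
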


\begin{figure}
\centering
\includegraphics[scale=0.14, angle=0]{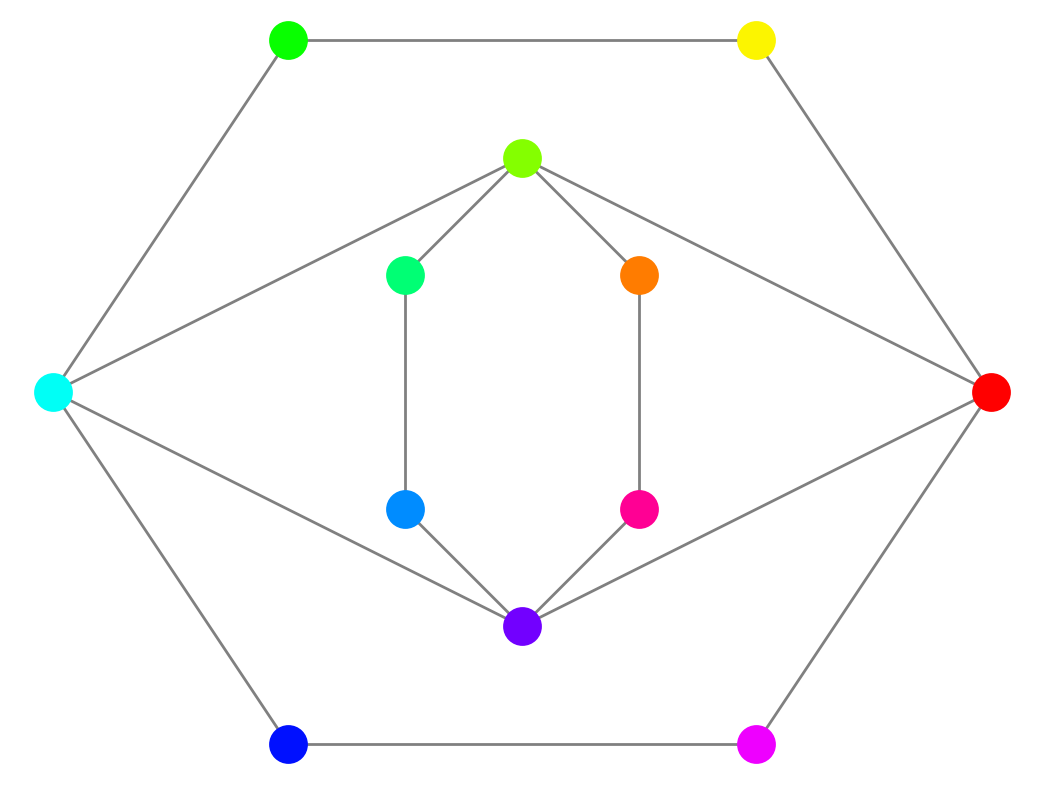}\label{fig:zero}
\caption{The \emph{eye graph}
supports infinitely many non-equivalent stable equilibria.
A cyclic colormap
is used to represent phases on vertices.
The phases of the outer cycle are~$(2k\pi/6)_{k=1,\ldots,6}$,
starting from the rightmost vertex and proceeding counterclockwise.
The phases of the inner cycle are~$(2k\pi/6 + \beta)_{k=1,\ldots,6}$ with~$\beta=\pi/2$,
starting from the topmost vertex and proceeding counterclockwise.
In Section~\ref{sec:balanced} we prove that
a curve of non-equivalent stable equilibria is obtained by varying~$\beta$ in a
neighborhood of~$\pi/2$.
}
\label{fig:main}
\end{figure}

In Section~\ref{sec:tools} we recall a known technique: algebraic geometrization.
Up to a change of coordinates, the set of equilibria of~\eqref{eq:main} turns into an algebraic set.
Every algebraic set is the finite union of irreducible algebraic sets; in particular, this
implies that infinitely many equilibria can only appear inside a continuum of equilibria.
Moreover, the system has a gradient structure:
every solution is either an equilibrium or a curve joining two
algebraic sets of equilibria, in the direction that makes some energy function decrease.
We return to this topological/heteroclinic structure in Section~\ref{sec:heteroclinic},
in which we analyze explicitly two examples of small cardinality.

Section~\ref{sec:balanced} concerns~\emph{balanced configurations},
the main topic of the paper.
We say that a subset of vertices~$\K$ is \emph{balanced} if~$\sum_{k\in \K} e^{i\theta_k} = 0$.
As we will see, balanced configurations are able to ``effectively disconnects'' the network,
leading to manifolds of equilibria of arbitrarily large dimension on connected graphs.
The main challenge we will face is designing graphs for which
these manifolds are transversally stable.

It is interesting to notice that balanced configurations appear in several areas of mathematics,
although with different names.
First, the quantity~$\sum_{k=1}^n e^{i\theta_k}$, known as~\emph{order parameter},
is widely used as a measure of
synchronization in phase oscillator networks~\cite{mirollo2012asymptotic, pazo2005thermodynamic, bick2011chaos, kuramoto1987statistical}.
Second, balanced configurations of are known as balanced graph representations
in algebraic graph theory~\cite{godsil2001algebraic}.
Third, balanced configurations are in $1$-to-$1$ correspondence
with equilateral polygon linkages in
topology~\cite{kamiyama1996topology, kamiyama1992elementary, kapovich1996symplectic, mandini2014duistermaat}.

In Section~\ref{sec:aligned} we discuss
\emph{aligned} configurations, those in which any two phases differ by~$0$ or~$\pi$.
They are appear in literature with different names~\cite{park2016weakly, martens2016chimera, kopell1995anti, vathakkattil2020limits, markdahl2021counterexamples, hendrickx2012convergence, ren2005consensus}.
In this paper we are mainly interested in the interplay between aligned configurations
and balanced configurations.
We prove that every equilibrium of a complete bipartite graph is a combination of these.

Our insights raise a number of questions.
We know that the set of equilibria can be written as a finite union of algebraic varieties.
Algebraic varieties are not necessarily manifolds, due to the presence of singular points.
However:

\begin{conjecture}
For every graph the set of equilibria of~\eqref{eq:main}
is a finite union of manifolds.
\end{conjecture}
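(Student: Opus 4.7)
The plan is to combine the algebraic geometrization recalled in Section~\ref{sec:tools} with the classical stratification theory of real algebraic sets.

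First, I would make the algebraic structure of $\E$ explicit. Under the substitution $(x_j,y_j)=(\cos\theta_j,\sin\theta_j)$, the torus $\T^n$ embeds as the real algebraic variety cut out by $x_j^2+y_j^2=1$ in $\R^{2n}$, and the identity $\sin(\theta_k-\theta_j)=y_k x_j-x_k y_j$ turns each equilibrium equation into a polynomial equation in the $(x_j,y_j)$. Hence the equilibrium set $\E$ is a real algebraic subset of $\R^{2n}$; this is essentially the content of the geometrization of Section~\ref{sec:tools}.

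Next, I would invoke the Whitney stratification theorem for real algebraic sets: every real algebraic subset of $\R^N$ admits a finite Whitney stratification by smooth semialgebraic submanifolds. Applied to $\E$, this immediately yields a finite disjoint decomposition $\E=\bigsqcup_{i=1}^N M_i$ with each $M_i$ a smooth manifold, proving the conjecture. A more elementary self-contained argument proceeds by induction on $\dim V$: decompose a real algebraic variety $V$ into its finitely many irreducible components, peel off the smooth regular locus $V_{\mathrm{reg}}$ of each, and iterate on the strictly lower-dimensional singular locus $V_{\mathrm{sing}}$. The recursion terminates because dimensions strictly decrease and the number of irreducible components stays finite at each step.

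The main obstacle is not logical but conceptual. The argument above is essentially formal and produces a stratification devoid of any Kuramoto-specific meaning. What the surrounding paper suggests one might actually want --- and what I suspect is the real content behind the conjecture --- is a \emph{canonical} stratification whose strata correspond to combinatorial data on the graph $\G$, such as patterns of balanced subsets $\K$ and aligned pairs. Producing such an intrinsic decomposition, compatible with the balanced/aligned analysis of Sections~\ref{sec:balanced}--\ref{sec:aligned}, appears to be the genuinely difficult part, and lies well beyond the existence statement itself.
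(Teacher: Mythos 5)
First, a point of framing: the paper does not prove this statement. It is posed as an open conjecture, immediately after the observation that the equilibrium set is a finite union of algebraic varieties and that ``algebraic varieties are not necessarily manifolds, due to the presence of singular points.'' So there is no proof in the paper to compare against; the only question is whether your argument actually settles the conjecture.

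It does not, and the reason is the meaning of ``manifold.'' Your stratification argument (Whitney stratification, or the elementary regular-locus/singular-locus recursion) is correct as far as it goes, but what it proves is that any real algebraic set is a finite \emph{disjoint} union of \emph{locally closed} smooth submanifolds. Under that reading the conjecture is a standard fact about semialgebraic sets, true for every system of polynomial equations whatsoever, and would not have been worth conjecturing. Every example in the paper points to the stronger reading: the equilibrium set should be a finite union of \emph{closed} (compact, boundaryless) smooth submanifolds of~$\T^n$ --- as with the three mutually tangent $2$-tori $B_1\cup B_2\cup B_3$ in the complete graph on four vertices, or the genus-$4$ surface of balanced configurations on five vertices. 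The content of the conjecture is that the singularities of the equilibrium variety are always of ``crossing type'' (several smooth closed pieces meeting one another) and never genuinely non-manifold points such as a cusp or the pinch point of a Whitney umbrella. Stratification cannot see this distinction: applied to a Whitney umbrella it happily returns finitely many smooth strata, yet the umbrella is not a finite union of closed submanifolds. Peeling off the regular locus leaves precisely the non-compact strata whose closures are the problematic singular sets, so nothing about the Kuramoto equations has been used and nothing about the conjecture has been decided. Your closing paragraph senses this (``essentially formal\ldots devoid of any Kuramoto-specific meaning'') but misdiagnoses the problem as one of canonicity rather than of correctness: the statement you proved is a different, and essentially vacuous, one.
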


Our analysis may extend to
Kuramoto networks on weighted graphs, hypergraphs, or with intrinsic frequencies.
A connection between the analysis of
balanced configurations on weighted graph and the theory of moduli spaces
in topology~\cite{kapovich1995moduli, mandini2014duistermaat, shimamoto2005spaces}
is outlined in Section~\ref{sec:balanced}.

Finally, manifolds of stable equilibria may appear on graphs
that are close to the global synchronization
constant~\cite{Ling2019, lu2020, Taylor2012, Yoneda2021}, thus
limiting the effectiveness of linear stability analysis as predicted in~\cite{Kassabov2021}.

\section*{Acknowledgements.}
The author would like to thank Christian Bick for many helpful discussions.



%


\section{A System Rich in Structure.} \label{sec:tools}
The combinatorial structure of the underlying graph,
together with the algebraic properties of the sine function,
give the coupled dynamical system~\eqref{eq:main} some known,
peculiar properties, which we review in detail in this section.

\subsection{Phase-Shift Symmetry and Connectivity.}
The equations~\eqref{eq:main} remain invariant if the same constant
is added to each phase. This (dynamical) symmetry is known as~\emph{phase shift} and
defines an action of the group~$\T$ on the phase space~$\T^n$.
The phase space~$\T^n$ is foliated into~$(n-1)$-dimensional dynamically
invariant tori. Each leaf supports the same dynamics and
the group~$\T$ acts freely on the leaves.

As a consequence, equilibria are never isolated:
every equilibrium is contained in a $1$-dimensional
torus of equilibria, its orbit under the group action.
We say that two equilibria are \emph{equivalent} if they belong to the same orbit.

If a graph is disconnected, distinct components have independent dynamics.
If every connected component is endowed with a stable equilibrium,
infinitely many non-equivalent stable equilibria
can be obtained by phase-shifting the phases of one component. In this way we can obtain
a torus of stable equilibria of dimension the number of connected components.
This is a non-interesting solution to the question motivating the paper.
For this reason, we will always require connectivity. 

\subsection{Gradient Descent.}
It is well known that~\eqref{eq:main} is a gradient
dynamical system~\cite{van1993lyapunov, Ling2019, jadbabaie2004stability}.
We can write~\eqref{eq:main} as~$\dot \theta = -\nabla E(\theta)$ where~$E:\T^N\to \R$
is the \emph{energy function}
\begin{equation} \label{eq:energy}
	E(\theta) = \sum_{jk \in \E(\G)} \p{1-\cos(\theta_j-\theta_k)}.
\end{equation}
Here~$\E(\G)$ denotes the set of edges and each edge is counted exactly once.

The identity~$\dot \theta = -\nabla E(\theta)$ tells that
a solution always evolves in the direction where the energy~\eqref{eq:energy}
decreases maximally. Equilibria are exactly the critical points of the energy
and the only periodic trajectories.
A solution is either an equilibrium
or a curve joining two equilibria, traveled in the direction in which the energy decreases.

The energy function~\eqref{eq:energy} is real analytic.
In any real analytic gradient system the Lyapunov stable equilibria are exactly
the local minima of the energy function~\cite{Absil2006}.
It is interesting to notice that in a smooth (but not real analytic)
gradient system this statement can fail in both ways~\cite{Absil2006}.

\subsection{Algebraic Geometry.}
The set of equilibria of~\eqref{eq:main} is best understood in the language of algebraic
geometry. For basic definitions and results we refer
the reader to~\cite{hartshorne2013algebraic, lang2019introduction, shafarevich1994basic}.
Let us identify the phase space~$\T^n$ with the subset of~$\R^{2n}$ defined by
\begin{equation} \label{eq:ag_torus}
	x_j^2 + y_j^2 = 1, \qquad \text{for all } j=1,\ldots,n
\end{equation}
where~$x_k = \cos(\theta_k)$ and~$y_k = \sin(\theta_k)$.
Then the equilibria of the system are
the common solutions of~\eqref{eq:ag_torus} and
\begin{equation} \label{eq:ag_equilibria}
	\sum_{k\in \N(j)} x_k y_j - x_j y_k = 0, \qquad \text{for all } j=1,\ldots,n.
\end{equation}
Therefore, the set of equilibria~$X$ is an algebraic set.
As such, it has a unique decomposition into irreducible components:
\begin{equation} \label{eq:ag_decomp}
	X = X_1 \cup \ldots \cup X_ m.
\end{equation}
Each~$X_i$ is an irreducible algebraic set (also known as algebraic variety)
and none of the~$X_i$ is contained in the union of
the others. Each~$X_i$ has a well defined dimension and tangent space at every point,
except for singular points, if any.

The $1$-dimensional components in~\eqref{eq:ag_decomp} are orbits of equivalent equilibria,
topologically they are $1$-dimensional tori.
Since the decomposition~\eqref{eq:ag_decomp} is finite, we immediately learn
something important about the equilibria.
Either there are finitely many non-equivalent equilibria,
or there is a component of dimension greater than~$1$.
In particular, the number of equilibria that are isolated up to phase-shift is finite.

Algebraic geometry does not prevent the existence of infinitely many non-equivalent equilibria.
By contrast, it helps us understanding their geometry.
The methods used in~\cite{baillieul1982geometric, Chen2018}
to prove the finiteness of the equilibria
apply to a (related but) different system and only work for a generic choice of coupling
coefficients.

\subsection{Stability.}
Every solution of~\eqref{eq:main} is either an equilibrium
or a curve joining two irreducible components.
A complete understanding of the system~\eqref{eq:main} requires understanding
the components~\eqref{eq:ag_decomp} and how they are connected by
solutions.

In order to do so, we need to analyze the stability of each component.
In this section we briefly review the analysis of $1$-components,
which is the case already known in literature.
Components of larger dimension will be analyzed in Section~\ref{sec:balanced}.

Since there are no isolated equilibria,
throughout the paper \emph{stable} will always mean Lyapunov stable.

Let~$(a_{jk})_{j,k}$ denote the adjacency matrix of the graph.
Fix any~$\theta = (\theta_j)_{j=1,\ldots,n}$.
The component~$(j,k)$ of the Jacobian matrix of the system at~$\theta$ is
\begin{equation} \label{eq:jacobian}
\begin{cases}
a_{jk} \cos(\theta_k-\theta_j) & \text{if } j\neq k \\
-\sum_{h,\, h\neq j} a_{jk} \cos(\theta_h-\theta_j) & \text{if } j=k.
\end{cases}
\end{equation}
For every~$\theta$ at least one Jacobian eigenvalue is zero;
it corresponds to the direction of phase-shift~$(1,\ldots,1)$.

Suppose now that~$\theta$ is an equilibrium, and that
exactly one eigenvalue is~$0$ and the others are strictly negative.
The zero eigenvalue corresponds to the direction tangent to the
$1$-dimensional torus of equilibria
\[
	\Gamma = \{(\theta_j + \alpha)_{j=1,\ldots,n} \mid \alpha\in\T \}.
\]
The zero eigenvalue disappears by restricting dynamics to the leaf containing~$\theta$,
in which the equilibrium is isolated.
In particular~$\theta$ is asymptotically stable in its leaf.

By phase shift symmetry, each equilibrium in~$\Gamma$ is asymptotically stable in its leaf.
The leaf locally coincides with the stable manifold of the equilibrium.
These manifolds foliates the space around~$\Gamma$
and intersect~$\Gamma$ orthogonally (since the Jacobian~\eqref{eq:jacobian} is symmetric).

\section{Balanced Configurations and Manifolds of Equilibria.} \label{sec:balanced}
This is the main section of the paper. For us, balanced configurations are a tool
to understand manifolds of equilibria. For their role in other branches of mathematics and science
we refer the reader to the introduction.

It is convenient for us to expand the notions of equilibrium and balanced configuration
to a proper subset of vertices.

\begin{definition} \label{def:equilibrium_balanced}
A \emph{configuration}
is a pair consisting of a set of vertices~$\K\subseteq \V(\G)$
and a vector of phases~$\theta_\K = (\theta_k)_{k\in \K}$.
An~\emph{equilibrium} of~$\K$ is a configuration satisfying
\begin{equation} \label{eq:equilibrium}
	\sum_{k\in \N(j)\cap \K} \sin(\theta_k - \theta_j) = 0, \qquad \text{for all } j\in \K.
\end{equation}
A configuration of~$\K$ is \emph{balanced} if
\begin{equation} \label{eq:balanced}
	\sum_{k\in \K} e^{i\theta_k} = 0.
\end{equation}
\end{definition}

The following lemma collects some elementary properties. The proof is omitted. 

\begin{lemma} \label{lem:basic} Let~$(\K, \theta_\K)$ be a configuration.
The following facts are true:
\begin{enumerate} [label = (\roman*)]
\item The equilibria of~$\V(\G)$ according to Definition~\ref{def:equilibrium_balanced}
are exactly the equilibria of the dynamical system~\eqref{eq:main};
\item For every~$\alpha\in\T$ the configuration~$\theta_\K$ is an equilibrium if and only
if the configuration~$\theta_\K+\alpha = (\theta_k+\alpha)_{k\in \K}$
is an equilibrium; \label{lem:basic:2}
\item For every~$\alpha\in\T$ the configuration~$\theta_\K$ is balanced if and only
if the configuration~$\theta_\K+\alpha = (\theta_k+\alpha)_{k\in \K}$
is balanced; \label{lem:basic:3}
\item Let~$j$ be any vertex and suppose that~$(\K, \theta_\K)$ is balanced. Then
\begin{align} 
	& \sum_{k\in \K} \cos(\theta_k - \theta_j) = 0, \label{eq:cosine} \\
	& \sum_{k\in \K} \sin(\theta_k - \theta_j) = 0. \label{eq:sine}
\end{align}
\end{enumerate}
\end{lemma}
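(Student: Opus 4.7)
The plan is to verify each of the four items in turn, since they are all direct unpackings of the definitions. None should require more than a short computation.

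For item (i), I would simply compare the two sides of the definitions. When $\K = \V(\G)$, the sum $\sum_{k\in \N(j)\cap \V(\G)}\sin(\theta_k-\theta_j)$ in~\eqref{eq:equilibrium} collapses to $\sum_{k\in \N(j)}\sin(\theta_k-\theta_j)$, which is exactly $\dot\theta_j$ in~\eqref{eq:main}. Hence the two notions of equilibrium coincide.

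For items (ii) and (iii), the key observation is translation invariance. For (ii), I would use that $\sin\bigl((\theta_k+\alpha)-(\theta_j+\alpha)\bigr)=\sin(\theta_k-\theta_j)$, so each summand in~\eqref{eq:equilibrium} is unchanged under the shift, and the two equilibrium conditions are literally the same equation. For (iii), I would factor: $\sum_{k\in \K} e^{i(\theta_k+\alpha)} = e^{i\alpha}\sum_{k\in \K} e^{i\theta_k}$, and since $e^{i\alpha}\neq 0$, vanishing of one side is equivalent to vanishing of the other.

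For item (iv), the natural route is via complex exponentials. Assuming $\sum_{k\in \K}e^{i\theta_k}=0$, I would multiply by $e^{-i\theta_j}$ to get
\[
    \sum_{k\in \K} e^{i(\theta_k-\theta_j)} \;=\; e^{-i\theta_j}\sum_{k\in \K} e^{i\theta_k} \;=\; 0,
\]
and then take real and imaginary parts to obtain~\eqref{eq:cosine} and~\eqref{eq:sine} simultaneously.

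I do not anticipate a genuine obstacle: each item is a one-line verification using translation invariance of the sine and the bilinearity of the complex exponential. The only mildly delicate point is making item (iv) transparent, where the cleanest presentation is the complex-exponential manipulation above rather than a trigonometric expansion of $\cos(\theta_k-\theta_j)$ and $\sin(\theta_k-\theta_j)$ via angle-subtraction formulas.
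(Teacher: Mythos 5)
Your proposal is correct and follows exactly the paper's route: the paper dismisses (i)--(iii) as trivial (which your short verifications make explicit) and proves (iv) precisely by multiplying~\eqref{eq:balanced} by~$e^{-i\theta_j}$ and taking real and imaginary parts. Nothing to add.
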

\begin{proof}
Parts~$(i)$, $(ii)$, and~$(iii)$ are trivial.
In order to obtain~$(iv)$, multiply~\eqref{eq:balanced} by~$e^{-i\theta_j}$
and take real and imaginary part. This gives~\eqref{eq:cosine} and~\eqref{eq:sine}
respectively.
\end{proof}

Equation~\eqref{eq:sine} tells us that
the net input received by~$j$ from a balanced set of neighbors is zero.
This suggests how to obtain manifolds of equilibria of arbitrarily large dimension.

\begin{lemma} \label{lem:balanced}
Suppose that there is a partition of the vertices of~$\G$
into non-empty parts $\J_1,\ldots,\J_d$
and a configuration~$\theta=(\theta_j)_{j=1,\ldots,n}$ such that
\begin{enumerate} [label = (\roman*)]
\item every~$(\J_p, \theta_{\J_p})$ is an equilibrium; \label{lem:balanced:1}
\item for every~$\J_p$,~$\J_q$ distinct and for every~$j\in \J_p$
the set of neighbors of~$j$ in~$\J_q$ is balanced. \label{lem:balanced:2}
\end{enumerate}
Then for every~$\alpha_1,\ldots,\alpha_d\in \T$ the configuration
\begin{equation} \label{eq:d-shift}
	(\theta_{\J_1} + \alpha_1, \ldots, \theta_{\J_d} + \alpha_1)
\end{equation}
is an equilibrium of~$\G$. In particular, the set of equilibria of~$\G$
contains a torus of dimension~$d$.
\end{lemma}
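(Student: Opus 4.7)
The plan is to verify the equilibrium condition~\eqref{eq:main} vertex-by-vertex for the shifted configuration~\eqref{eq:d-shift}, splitting the sum over neighbors according to which block of the partition they lie in. The ``within-block'' contribution will vanish by hypothesis~\ref{lem:balanced:1}, and the ``across-block'' contribution by hypothesis~\ref{lem:balanced:2} together with Lemma~\ref{lem:basic}\,$(iv)$.

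In detail, fix any vertex~$j$ and let~$p$ be the unique index with~$j\in \J_p$. Write~$\theta'$ for the configuration~\eqref{eq:d-shift}, so that~$\theta'_k-\theta'_j = \theta_k-\theta_j+(\alpha_q-\alpha_p)$ whenever~$k\in\J_q$. Partitioning the neighbors of~$j$ by block,
\begin{equation*}
\sum_{k\in \N(j)}\sin(\theta'_k-\theta'_j)
=\sum_{q=1}^d \sum_{k\in \N(j)\cap \J_q} \sin\bigl(\theta_k-\theta_j + (\alpha_q-\alpha_p)\bigr).
\end{equation*}
For~$q=p$ the shift~$\alpha_q-\alpha_p$ vanishes, and the inner sum equals~$\sum_{k\in\N(j)\cap\J_p}\sin(\theta_k-\theta_j)$, which is zero by~\ref{lem:balanced:1} since~$(\J_p,\theta_{\J_p})$ is an equilibrium.

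For~$q\neq p$, set~$\beta = \alpha_q-\alpha_p$ and~$\K = \N(j)\cap \J_q$. Using the sine addition formula,
\begin{equation*}
\sum_{k\in \K}\sin(\theta_k-\theta_j + \beta)
= \cos\beta \sum_{k\in \K}\sin(\theta_k-\theta_j) + \sin\beta \sum_{k\in \K}\cos(\theta_k-\theta_j).
\end{equation*}
By hypothesis~\ref{lem:balanced:2} the set~$\K$ is balanced, so Lemma~\ref{lem:basic}\,$(iv)$ makes both sums on the right vanish. Summing over all~$q$ gives~$\sum_{k\in\N(j)}\sin(\theta'_k-\theta'_j)=0$, so~\eqref{eq:d-shift} is an equilibrium of~$\G$.

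For the last assertion, the map~$\Phi:\T^d\to\T^n$ sending~$(\alpha_1,\ldots,\alpha_d)$ to~\eqref{eq:d-shift} is a continuous group homomorphism with respect to coordinatewise addition. Because each~$\J_p$ is nonempty, the value of~$\alpha_p$ is recovered from~$\theta'_{j}-\theta_{j}\pmod{2\pi}$ for any~$j\in\J_p$, so~$\Phi$ is injective; its image is therefore a~$d$-dimensional torus contained in the equilibrium set. The only place where effort is required is in checking the across-block cancellation, which is essentially the content of Lemma~\ref{lem:basic}\,$(iv)$, so no real obstacle arises.
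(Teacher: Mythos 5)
Your proof is correct and follows essentially the same route as the paper: split the neighbor sum of each vertex by block, kill the within-block part using hypothesis~(i) and the across-block part using the balance condition via Lemma~\ref{lem:basic}$(iv)$. The only cosmetic difference is that the paper first verifies the unshifted configuration and then invokes the phase-shift invariances of Lemma~\ref{lem:basic}$(ii)$--$(iii)$, whereas you absorb the shifts directly with the sine addition formula (and you add a welcome justification that the resulting torus really has dimension~$d$); both arguments rest on the same decomposition and the same lemma.
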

\begin{proof}
Take any vertex~$j$. Suppose that~$j\in \J_p$. We have
\[
	\dot\theta_j = \sum_{k\in \N(j) \cap \J_p} \sin(\theta_j-\theta_k)
		+ \sum_{q\neq p} \sum_{k\in \N(j) \cap  \J_q} \sin(\theta_j-\theta_k).
\]
Since~$\theta_{\J_p}$ is an equilibrium of~$\J_p$, the first term is zero.
Since~$\N(j) \cap  \J_q$ is balanced for every~$q\neq p$, by~\eqref{eq:sine}
the second term is zero.
This shows that~$\theta$ is an equilibrium of~$\G$.
The statement now follows from
Lemma~\ref{lem:basic}\ref{lem:basic:2} and Lemma~\ref{lem:basic}\ref{lem:basic:3}.
\end{proof}

Let~$\Gamma \subseteq \T^n$ be the $d$-dimensional torus of equilibria~\eqref{eq:d-shift}.
In general, not all the equilibria in~$\Gamma$
have the same stability. As we will see, by
varying~$\alpha_1,\ldots,\alpha_d$ some Jacobian eigenvalues can change sign.
This phenomenon is a particular case of topological bifurcation~\cite{liebscher2015bifurcation}.
In the following proof we construct graphs
for which a non-empty open subset of~$\Gamma$ is transversally stable.

\begin{figure}
\centering
\begin{tikzpicture} [scale=0.8, rotate=0]
\draw [shift={(0,0)}]
	(1,0) -- (0.5,0.866) -- (-0.5,0.866) -- (-1,0) -- (-0.5,-0.866) -- (0.5,-0.866) -- (1,0);
\draw [shift={(4,4)}]
	(1,0) -- (0.5,0.866) -- (-0.5,0.866) -- (-1,0) -- (-0.5,-0.866) -- (0.5,-0.866) -- (1,0);
\draw [shift={(-4,4)}]
	(1,0) -- (0.5,0.866) -- (-0.5,0.866) -- (-1,0) -- (-0.5,-0.866) -- (0.5,-0.866) -- (1,0);
\draw [shift={(4,-4)}]
	(1,0) -- (0.5,0.866) -- (-0.5,0.866) -- (-1,0) -- (-0.5,-0.866) -- (0.5,-0.866) -- (1,0);
\draw [shift={(-4,-4)}]
	(1,0) -- (0.5,0.866) -- (-0.5,0.866) -- (-1,0) -- (-0.5,-0.866) -- (0.5,-0.866) -- (1,0);
\draw (1,0) to [out=15, in=-70, distance=60pt] (5,4);
\draw (1,0) to [out=30, in=-100, distance=60pt] (3,4);
\draw (-1,0) to [out=180-60, in=-90+10, distance=80pt] (5,4);
\draw (-1,0) to [out=180-45, in=180, distance=80pt] (3,4);
\draw (1,0) to [out=60, in=-100, distance=80pt] (-5,4);
\draw (1,0) to [out=45, in=0, distance=80pt] (-3,4);
\draw (-1,0) to [out=180-15, in=-110, distance=60pt] (-5,4);
\draw (-1,0) to [out=180-30, in=-90+10, distance=60pt] (-3,4);
\draw (1,0) to [out=-15, in=70, distance=60pt] (5,-4);
\draw (1,0) to [out=-30, in=100, distance=60pt] (3,-4);
\draw (-1,0) to [out=180+60, in=+90-10, distance=80pt] (5,-4);
\draw (-1,0) to [out=180+45, in=180, distance=80pt] (3,-4);
\draw (1,0) to [out=-60, in=100, distance=80pt] (-5,-4);
\draw (1,0) to [out=-45, in=0, distance=80pt] (-3,-4);
\draw (-1,0) to [out=180+15, in=+110, distance=60pt] (-5,-4);
\draw (-1,0) to [out=180+30, in=90-10, distance=60pt] (-3,-4);
\end{tikzpicture}
\caption{This figure illustrates the graph~$\G_5$.}
\label{fig:cycles}
\end{figure}
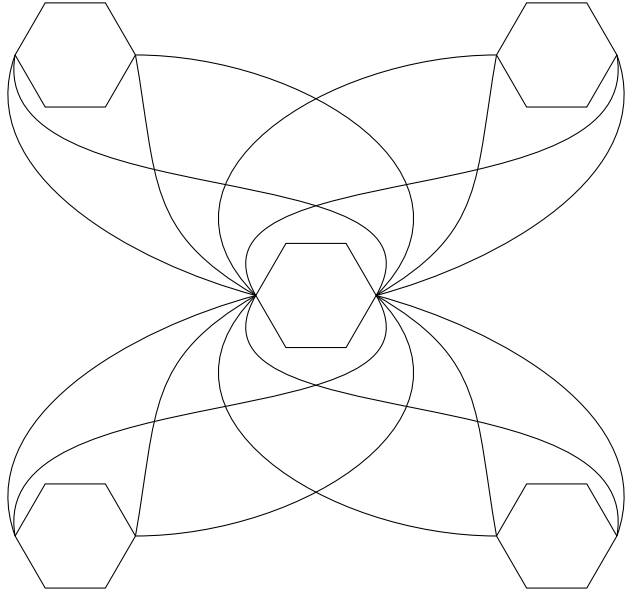

\begin{proof} [Proof of Theorem~\ref{thm:main}]
The idea is connecting a family of stable $6$-cycle configurations in a way that they only
interact through balanced subsets.
Let us begin with a~$6$-cycle and phases~$(2k\pi/6+\alpha)_{k=1,\ldots,6}$.
Since~$\cos(2\pi/6) = 1/2$ the Jacobian matrix~\eqref{eq:jacobian} is
\begin{equation} \label{eq:circulant_matrix}
\small
\begin{pmatrix} 
-1 &\frac{1}{2} & 0 & 0 & 0 & \frac{1}{2} \\
\frac{1}{2} & -1 &  \frac{1}{2} & 0 &  0 &  0\\
0 & \frac{1}{2} & -1 & \frac{1}{2} & 0 & 0  \\
0 &  0  & \frac{1}{2} & -1  & \frac{1}{2} & 0 \\
0  & 0 & 0  & \frac{1}{2} & -1  & \frac{1}{2}\\
\frac{1}{2} & 0 & 0 & 0  & \frac{1}{2} & -1
\end{pmatrix}.
\normalsize
\end{equation}
Notice that it is independent of~$\alpha$.
Moreover, notice that the matrix is circulant. The eigenvalues of circulant
matrices are easy to compute~\cite{varga1954eigenvalues, gray2006toeplitz}.
In this case the characteristic polynomial is
\[
	p(\lambda) = \lambda (\lambda+1/2)^2 (\lambda+3/2)^2 (\lambda+2).
\]
Exactly one eigenvalue is zero and the others are strictly negative.
As~$\alpha$ can vary in~$\T$ we obtain a $1$-dimensional torus of stable equilibria.
The zero eigenvalue corresponds to the tangent space of this torus.

Notice that, in this configuration,
any pair of opposite vertices of the $6$-cycle is a balanced configuration.
This suggests how to connect several $6$-cycles together.

Consider~$d$ disjoint $6$-cycles and
add an edge between the first and fourth vertex of the first cycle with the first
and fourth vertex of every other cycle.
This amounts to adding~$4(d-1)$ edges.
Let~$\G_d$ denote the resulting graph.
The graph~$\G_2$ is the eye graph of Figure~\ref{fig:main} of the introduction.
The graph~$\G_5$ is given in Figure~\ref{fig:cycles}.

Let~$\alpha_1,\ldots,\alpha_d\in \T$ and
let~$(2k\pi/6+\alpha_i)_{k=1,\ldots,6}$ be the phases of the~$i$-th cycle.
The $6$-cycles only interact with each other through balanced pair of vertices.
By Lemma~\ref{lem:balanced} as~$\alpha_1,\ldots,\alpha_d$ vary in~$\T$
these configurations form a $d$-dimensional torus of equilibria~$\Gamma$.

In order to analyze stability, we look at Jacobian eigenvalues.
We want to show that there is an open set of~$(\alpha_1,\ldots,\alpha_d) \in \T^d$
for which exactly~$d$ Jacobian eigenvalues are~$0$ and the others strictly are negative.

In order to do so, we choose~$\alpha_1=0$ and~$\alpha_2=\ldots,\alpha_d = \pi/2$.
Every edge between two $6$-cycles
has phase difference~$\pi/2$, and since~$\cos(\pi/2)=0$ it follows that the Jacobian matrix
is a block diagonal matrix with~$d$ blocks all equal to~\eqref{eq:circulant_matrix}.
In particular the characteristic polynomial is
\[
	p(\lambda)^d = \lambda^d (\lambda+1/2)^{2d} (\lambda+3/2)^{2d} (\lambda+2)^d.
\]
Notice that~$0$ has multiplicity~$d$ and every other eigenvalue is strictly negative.

Since~$d$ is the dimension of the tangent space of~$\Gamma$, it follows that
there is a neighborhood of~$(0,\pi/2,\ldots,\pi/2) \in \T^d$ in which exactly~$d$
eigenvalues are zero and all the others are strictly negative.

Let us summarize what we have. There is a non-empty open subset~$V$ of~$\Gamma$
with the following properties: $V$ is a $d$-dimensional manifold,
every point in~$V$ is an equilibrium,
the stable manifold of every such equilibrium has codimension~$d$
and is orthogonal to~$V$.
The last statement holds since the Jacobian matrix~\eqref{eq:jacobian} is symmetric.

By Shoshitaishvili Theorem~\cite{liebscher2015bifurcation, shoshitaishvili1972bifurcations}
the stable manifolds form a foliation of the space near~$V$.
It follows that every~$\theta\in V$ is a Lyapunov stable equilibrium.
Indeed, a point near~$\theta$ lies in either the stable manifold of~$\theta$
or the stable manifold of a nearby equilibrium.
\end{proof}

\subsection{Stable Tori.}
In the proof of Theorem~\ref{thm:main} we obtain a torus of equilibria~$\Gamma$,
but only a proper subset is stable. Indeed, for~$d=2$ we can see in Figure~\ref{fig:eigenvalues}
that as the phase difference between the two cycles vary, an eigenvalue crosses zero;
stability is only guaranteed in a neighborhood of~$\pi/2$.
We now provide several graphs in which the set of stable equilibria contain
an entire $2$-dimensional torus.
Due to the size of the graphs, the eigenvalues
are computed numerically.

\begin{figure}[htbp]
\begin{subfigure}{0.33\columnwidth}
	\includegraphics[width=\linewidth]{noice.png}
    \caption{}
\end{subfigure} \qquad
\begin{subfigure}{0.33\columnwidth}
	\includegraphics[width=\linewidth]{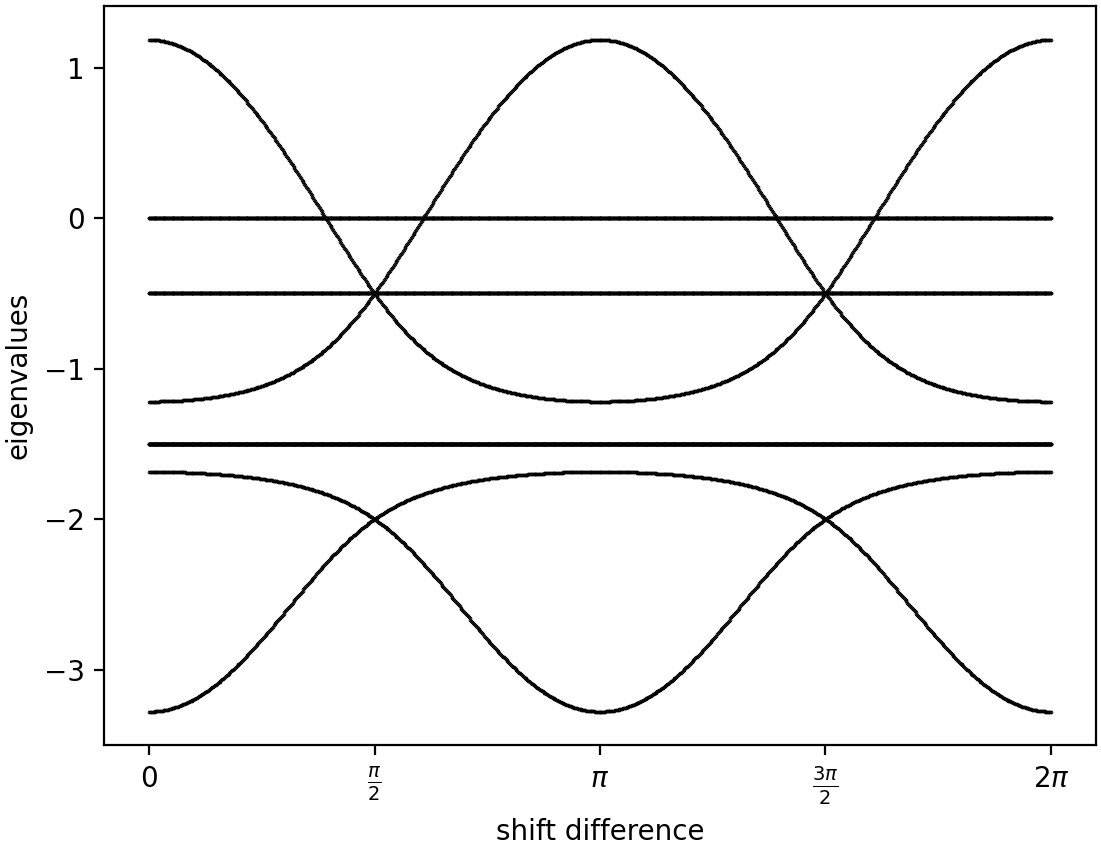}
    \caption{}
\end{subfigure}\\[0.1cm]
\begin{subfigure}{0.33\columnwidth}
	\includegraphics[width=\linewidth]{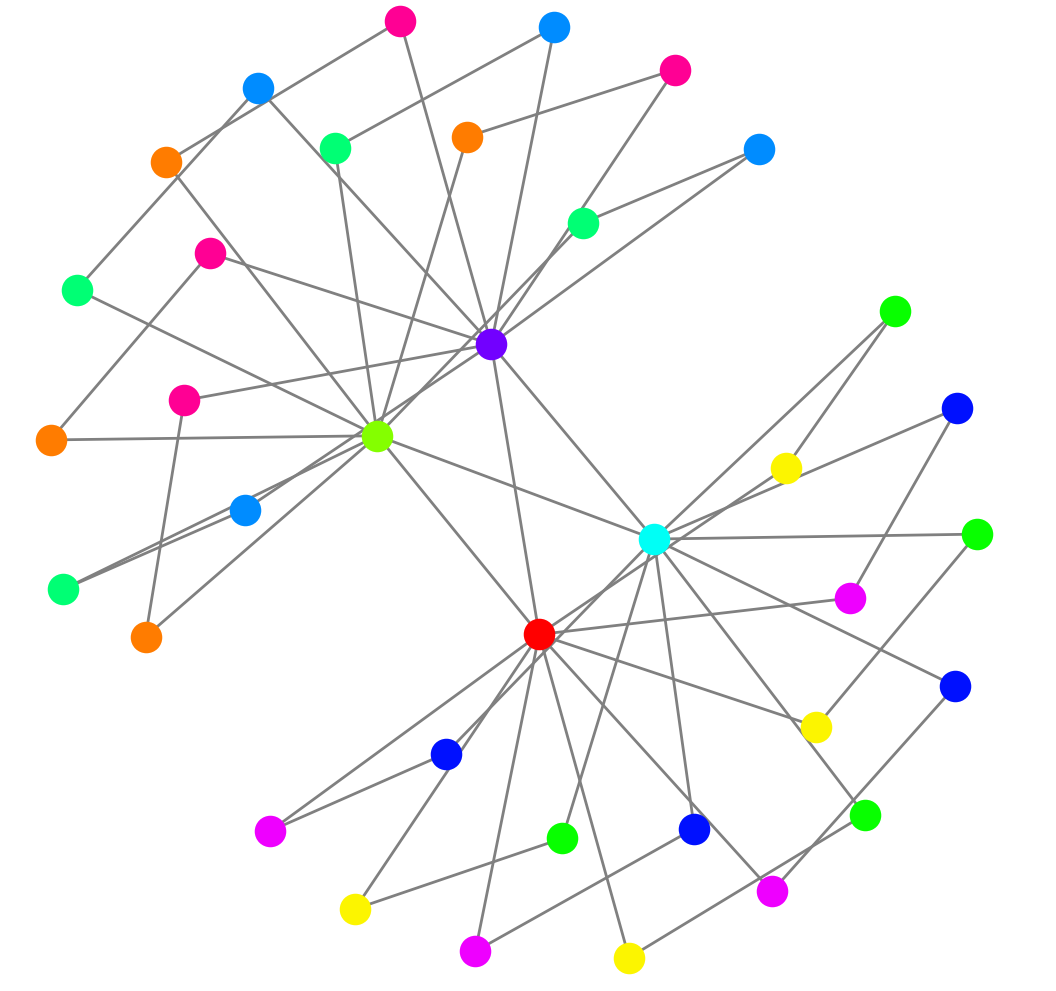}   
    \caption{}            
\end{subfigure} \qquad
\begin{subfigure}{0.33\columnwidth}
	\includegraphics[width=\linewidth]{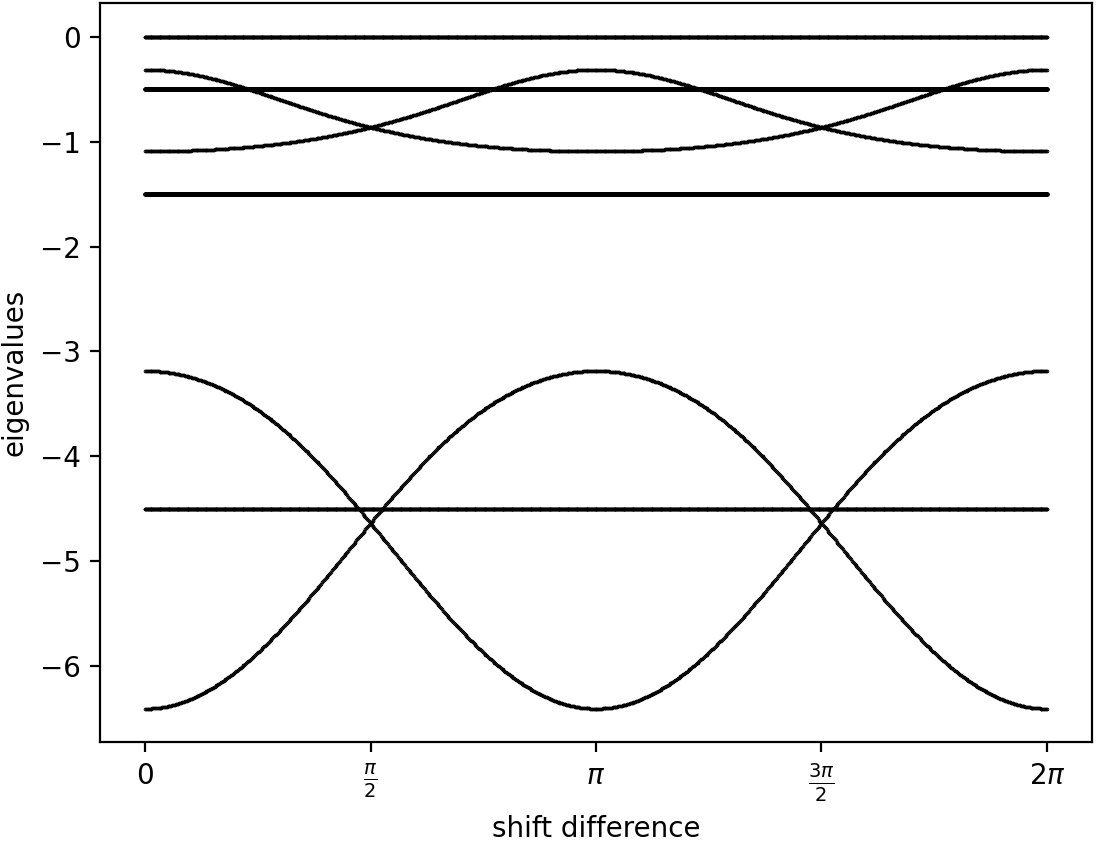}   
    \caption{}            
\end{subfigure}\\[0.1cm]
\begin{subfigure}{0.33\columnwidth}
	\includegraphics[width=\linewidth]{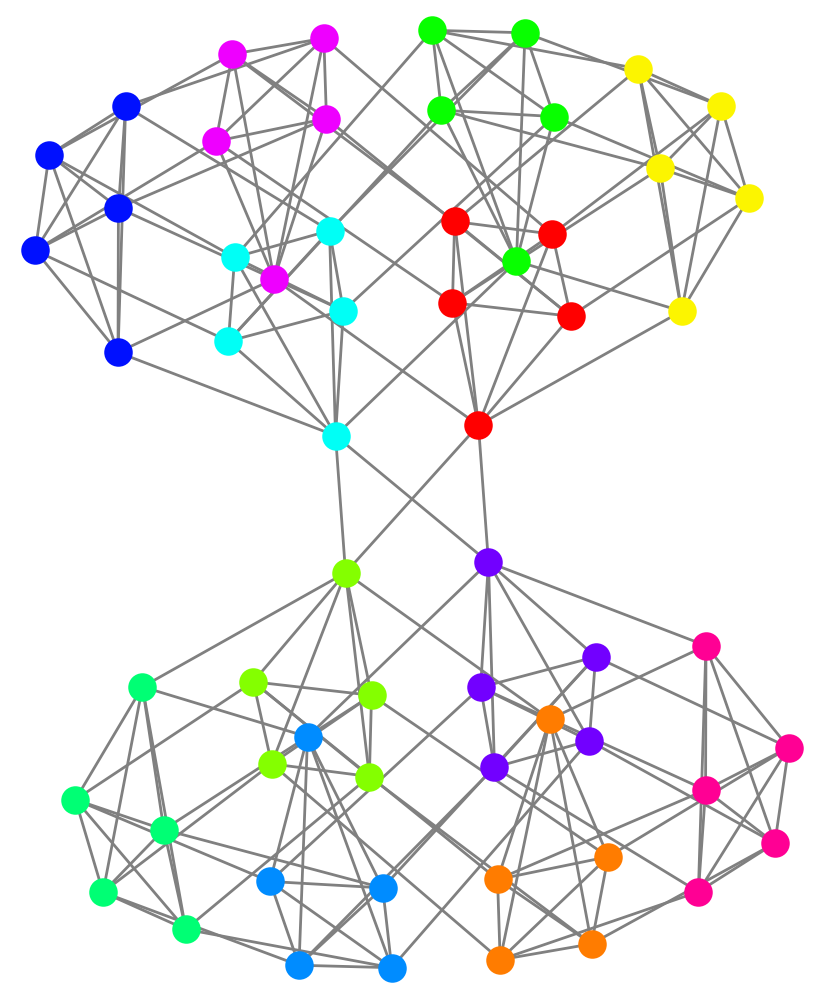}   
    \caption{}            
\end{subfigure} \qquad
\begin{subfigure}{0.33\columnwidth}
	\includegraphics[width=\linewidth]{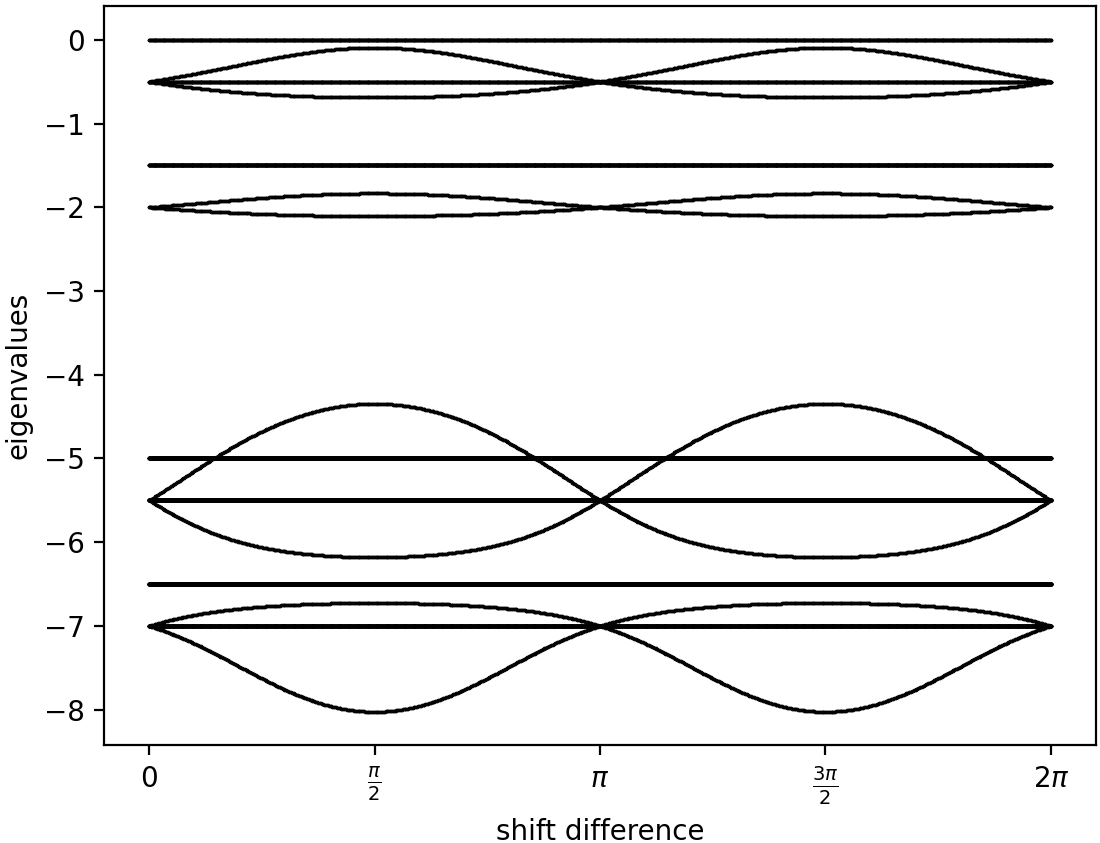}
    \caption{}            
\end{subfigure}\\[0.1cm]
\begin{subfigure}{0.33\columnwidth}
	\includegraphics[width=\linewidth]{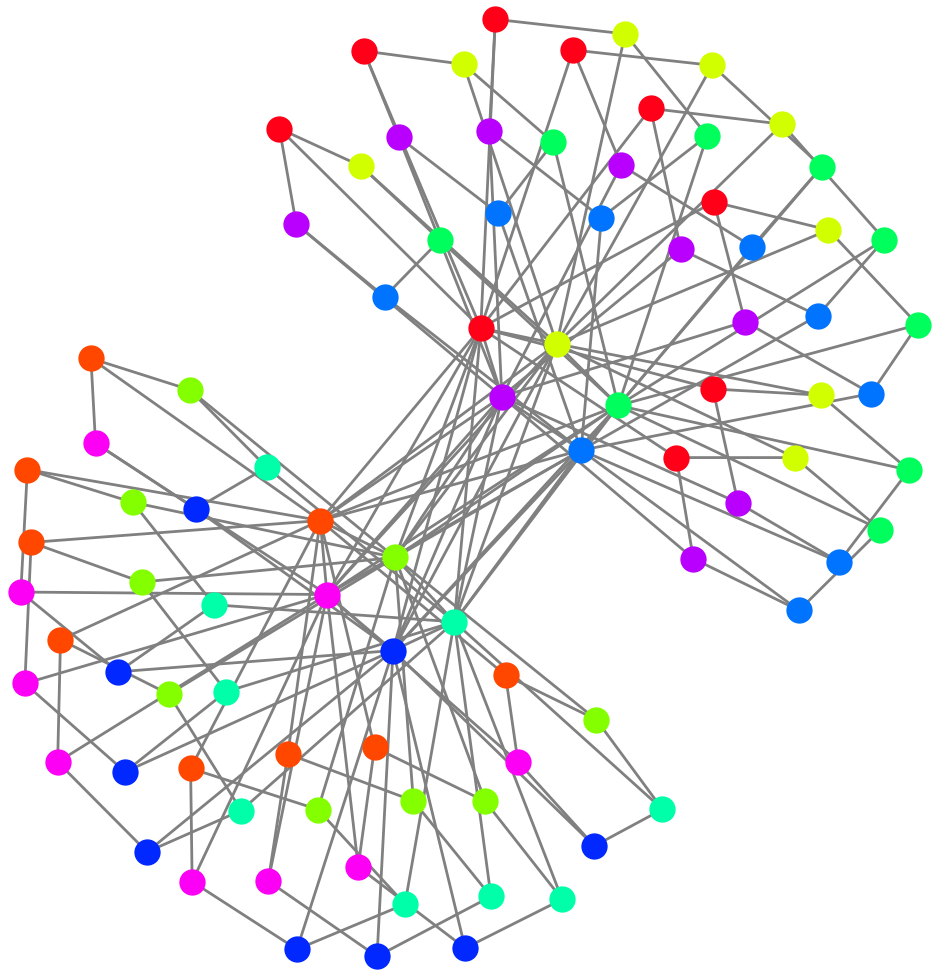}   
    \caption{}           
\end{subfigure} \qquad
\begin{subfigure}{0.33\columnwidth}
	\includegraphics[width=\linewidth]{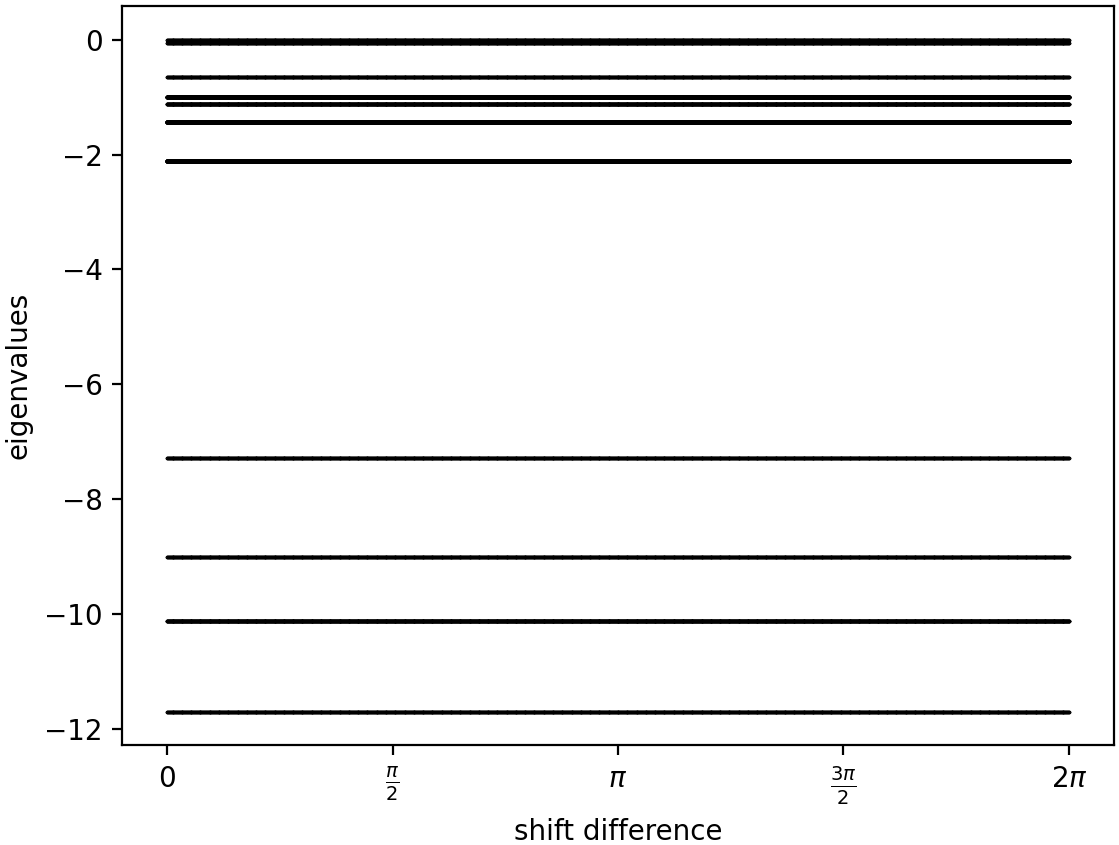}   
    \caption{}            
\end{subfigure}
\caption{On the left, graphs supporting $2$-dimensional manifold of stable equilibria.
On the right, eigenvalues as the phase-shift difference~$\beta-\alpha$ vary.}
\label{fig:eigenvalues}
\end{figure}

Let us denote by~$\H_{36}$, $\H_{60}$ and~$\H_{90}$ the graphs (C), (E), and~(G)
of Figure~\ref{fig:eigenvalues}, according to the number of vertices.
The vertices have been positioned so as to highlight the division into two subsets.
Each~$\H_{36}$, $\H_{60}$,~$\H_{90}$ supports a $2$-dimensional torus of stable equilibria.
The torus is obtained by phase-shifting the two parts.

Let us describe the configurations in more detail.
The graph $\H_{36}$ is obtained from the eye graph~$\G_2$ by taking of each
$6$-cycle~$3$ copies of itself and glueing them at the pair of vertices constituting
the balanced configuration.
One part has phases~$(2k\pi/6+\alpha)_{k=1,\ldots,6}$,
the other part~$(2k\pi/6+\beta)_{k=1,\ldots,6}$. Due to phase shift symmetry, stability
only depends on the phase difference~$\beta-\alpha$. As shown in Figure~\ref{fig:eigenvalues},
for every~$\alpha,\beta\in\T$ all non-zero eigenvalues are strictly negative.
As in the proof of Theorem~\ref{thm:main}, by Shoshitaishvili Theorem
it follows that for every~$\alpha,\beta\in \T$ this is a stable equilibrium.

Similarly~$\H_{60}$ is obtained from~$\G_2$
by substituting each vertex with a tetrahedron and connecting the tetrahedra in parallel
along the $6$-cycles.

By contrast, the graph~$\H_{90}$ is a bit different. It is obtained by fully connecting
two $5$-cycles with phases~$(2k\pi/5+\alpha)_{k=1,\ldots,5}$
and~$(2k\pi/5+\beta)_{k=1,\ldots,5}$ respectively. Notice that each cycle is a balanced
configuration. In order to make the configuration stable,
each cycle is copied~$8$ times and connected in parallel to its copies.
It is interesting to notice that in this case the eigenvalues do not depend on~$\alpha, \beta$,
see (H).

\subsection{Symmetry is not Necessary.}
Although the graphs discussed in this article are symmetric, symmetry is not necessary
for manifolds of stable equilibria.
Instead, it is useful for keeping the discussion as self-contained as possible,
as symmetric graphs can be described more easily in words.

We can enlarge a stable configuration of a graph
by connecting by an edge each vertex to an asymmetric graph whose phases
are synchronized with that vertex.
The enlarged configuration is stable. We can choose the auxiliary graphs so that
the enlarged graph is asymmetric.
If the original graph supports a $d$-dimensional manifolds
of stable equilibria, so does the enlarged one.

\subsection{The Geometry of Balanced Configurations.}
We are interested in the geometry of the set of balanced configurations of~$n$ vertices.

Two vertices are balanced if and only if the phases differ by~$\pi$. Therefore, balanced
configurations of two vertices form a $1$-dimensional torus.

There are two balanced
configurations with~$3$ vertices up to phase shift, corresponding to two non-equivalent
labeling of the vertices of an equilateral triangle,
thus leading to two disjoint $1$-dimensional tori.

Balanced configurations of~$4$ vertices are given by three non-equivalent
relabeling of the family~$(\alpha,\beta,\alpha+\pi,\beta+\pi)_{\alpha,\beta\in\T}$:
\begin{align*}
B_1&=\{(\alpha,\alpha+\pi,\beta,\beta+\pi) \mid \alpha,\beta\in\T\}, \\
B_2&=\{(\alpha,\beta,\alpha+\pi,\beta+\pi) \mid \alpha,\beta\in\T\}, \\
B_3&=\{(\alpha,\beta,\beta+\pi,\alpha+\pi) \mid \alpha,\beta\in\T\}.
\end{align*}
These $2$-dimensional tori intersect at aligned configurations.
Up to phase shift, the set of balanced configurations is homeomorphic to
three mutually tangent circles.

Until now, every balanced configuration is somewhat symmetric.
For~$n\geq 5$ the situation is far more complicated,
as the symmetry is lost.
Using Morse Theory it has been shown that,
up to phase shift, balanced configurations of~$5$
vertices form a surface of genus~$4$~\cite{havel1991use, kamiyama1992elementary}.
It is interesting that the separation between
the case~$n\leq 4$ and the case~$n\geq 5$ in terms of symmetry
becomes evident in terms of dynamics if the sine function is replaced by
a generic function~\cite[Proposition 2]{ashwin2016identical}.

In topology, balanced configurations appear in a different but equivalent form.
As noticed in~\cite{ashwin2008bifurcation}, we can associate to any balanced configuration
an equilateral polygon.
Let~$(\theta_k)_{k=1,\ldots,n}$ be a balanced configuration
and~$u_m = \sum_{k=1}^m e^{i\theta_k}$.
Then the sequence~$u_1,\ldots,u_n$ defines an equilateral polygon.
Here the term polygon must be taken somewhat loosely, since we allow vertices to
coincide and edges to intersect.

The space of equilateral polygons is studied, for example, in~\cite{kamiyama1996topology}.
Translating the results of~\cite{kamiyama1996topology} back into our language,
we can rephrase Theorem~A as follows:

\begin{theorem} [Y.Kamiyama] \label{thm:Y.Kamiyama}
Let~$n\geq 3$.
For an odd~$n$ the set of balanced configurations
is a smooth manifold of dimension~$n-2$.
For an even~$n$ the set of balanced configurations
is a manifold with singular points,
the generic dimension is~$n-2$ and the singular points are exactly the aligned configurations.
\end{theorem}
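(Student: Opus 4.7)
The plan is to realize the set of balanced configurations as $F^{-1}(0)$ for an explicit smooth map $F:\T^n\to\C$ and apply the implicit function theorem, isolating aligned configurations for a separate singularity analysis.

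First I would set $F(\theta)=\sum_{k=1}^n e^{i\theta_k}$, so that balanced configurations are exactly $F^{-1}(0)$. Since $\partial F/\partial\theta_k=ie^{i\theta_k}$, the real differential $DF_\theta:\R^n\to\R^2$ has image equal to the $\pi/2$-rotation of the real linear span of $\{e^{i\theta_k}\}_{k=1}^n$. Hence $DF_\theta$ fails to be surjective exactly when all the unit vectors $e^{i\theta_k}$ lie on a common line through the origin, that is, precisely at aligned configurations. At every non-aligned point the implicit function theorem yields a smooth local $(n-2)$-dimensional chart for $F^{-1}(0)$.

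For odd $n$ I would then observe that no aligned configuration can be balanced, since alignment forces $e^{i\theta_k}\in\{e^{i\alpha},-e^{i\alpha}\}$ and balancedness then requires equal numbers of the two signs, impossible when $n$ is odd. Thus for odd $n$ every balanced configuration is regular and $F^{-1}(0)$ is globally a smooth $(n-2)$-manifold.

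For even $n$, after a phase shift one writes an aligned balanced point as $\theta_k=\epsilon_k\pi$ with $\epsilon_k\in\{0,1\}$ and $\sum_k(-1)^{\epsilon_k}=0$, and expands $F(\epsilon_k\pi+\phi_k)$ to second order. Using $\sum_k(-1)^{\epsilon_k}=0$ this produces a linear constraint $\sum_k(-1)^{\epsilon_k}\phi_k=0$ from the imaginary part and a purely quadratic constraint $Q(\phi):=\sum_k(-1)^{\epsilon_k}\phi_k^2=0$ from the real part. The tangent cone at the origin is thus the intersection of the hyperplane $H=\{\sum_k(-1)^{\epsilon_k}\phi_k=0\}$ with $\{Q=0\}$, where $Q$ has signature $(n/2,n/2)$ on $\R^n$. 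A short linear-algebra check shows that $Q|_H$ has one-dimensional radical spanned by the phase-shift direction $(1,\ldots,1)$ and nondegenerate part of signature $(n/2-1,n/2-1)$, which is genuinely indefinite for all even $n\geq 4$. The resulting tangent cone is a nontrivial quadric cone (two transverse hyperplanes already when $n=4$), not a linear subspace.

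The main obstacle is upgrading this tangent-cone computation into the statement that $F^{-1}(0)$ itself is not a smooth submanifold near the aligned point. The cleanest route is the standard real-analytic fact that a set which is a smooth submanifold at $p$ has tangent cone at $p$ equal to its tangent space, hence linear; since our tangent cone is a nontrivial quadric cone, $F^{-1}(0)$ must be singular at every aligned balanced configuration. An alternative, more hands-on argument uses the indefiniteness of $Q|_H$ to split the quadric into two sheets and apply the implicit function theorem on each, exhibiting two distinct transverse smooth branches of $F^{-1}(0)$ meeting along the singular stratum.
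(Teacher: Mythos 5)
First, a point of reference: the paper does not prove this statement at all --- it is quoted as Theorem~A of Kamiyama's work on spaces of equilateral polygons, translated into the language of balanced configurations. So there is no internal proof to compare against, and your proposal has to be judged on its own. The first half of your argument is correct and complete: $F(\theta)=\sum_k e^{i\theta_k}$ has differential with image $i\cdot\mathrm{span}_{\R}\{e^{i\theta_k}\}$, which is $2$-dimensional exactly at non-aligned configurations, so the non-aligned part of $F^{-1}(0)$ is a smooth $(n-2)$-manifold; and the parity argument showing that aligned balanced configurations exist only for even $n$ settles the odd case. Your computation of the leading forms at an aligned point (linear constraint $\sum_k(-1)^{\epsilon_k}\phi_k=0$ from the imaginary part, quadratic constraint $Q(\phi)=\sum_k(-1)^{\epsilon_k}\phi_k^2=0$ from the real part) and of the signature of $Q$ restricted to the hyperplane $H$ (radical spanned by the phase-shift direction, nondegenerate part of signature $(n/2-1,n/2-1)$) are also correct.

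The gap is in the final step. What you have computed is the zero locus of the lowest-degree terms of $F$ at the aligned point, and this only gives an \emph{upper} bound for the geometric tangent cone of the real set $F^{-1}(0)$: over $\R$, unlike over $\C$, the actual set can be strictly smaller than the cone cut out by the leading forms (compare $x^2+y^4=0$, whose leading form vanishes on the whole $y$-axis while the set is a point). So "the tangent cone is a nontrivial quadric cone, hence nonlinear, hence the point is singular" does not follow until you prove the reverse inclusion, i.e.\ that directions genuinely filling out the quadric cone (or at least spanning more than an $(n-2)$-plane) are realized by curves inside $F^{-1}(0)$. This matters quantitatively: since $Q|_H$ has a $1$-dimensional radical plus a nondegenerate part of signature $(n/2-1,n/2-1)$, the cone $H\cap\{Q=0\}$ contains totally isotropic linear subspaces of dimension $n/2$, which for $n=4$ equals $n-2$; so without the reverse inclusion one cannot even rule out that the set is a smooth manifold tangent to such a subspace. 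The gap is fixable: for $v\in H$ with $Q(v)=0$ and $v$ not aligned-degenerate, one has $F(p+tv)=O(t^3)$ while $DF$ at $p+tv$ is a submersion with smallest singular value of order $t$, so a Newton-type correction of size $O(t^2)$ produces an actual curve in $F^{-1}(0)$ with tangent $v$; this shows the tangent cone contains a dense subset of the quadric cone and is therefore not linear. Alternatively, your sketched "two transverse branches" route works if made explicit --- e.g.\ the antipodal-pairing tori $\{\theta_{\sigma(k)}=\theta_k+\pi\}$ through the aligned point have tangent spaces whose joint span exceeds $n-2$ dimensions --- but as written it is only an assertion, and it is precisely the assertion that carries the content of the singularity claim.
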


Recall that aligned configurations are those in which any two phases differ by~$0$ or~$\pi$.
Notice that configuration that are both balanced and aligned are only possible if~$n$ is even.
The presence of singular points for an even~$n$ is already evident
for~$n=4$, at the aforementioned tori~$B_1$, $B_2$, $B_3$ intersect.
We conjecture that for every~$n\geq 3$ the set of balanced configurations is a branched manifold
and can be written as a union of smooth manifolds.

As already noticed, balanced configurations correspond to equilateral polygons.
In the theory of
moduli spaces~\cite{kapovich1995moduli, mandini2014duistermaat, shimamoto2005spaces}
non-equilateral polygons are considered, usually with generic edge lengths.
This may be useful in better understanding the Kuramoto model on weighted graphs.


\section{Aligned Configurations and Complete Bipartite Graphs.} \label{sec:aligned}
In this section we discuss synchronized and aligned equilibria.
We are mainly interested in the case of complete bipartite graphs, where balanced
and aligned configurations characterize all the equilibria.
For our purposes, it is convenient to expand these notions to a proper subset of vertices.

\begin{definition} \label{def:aligned}
A configuration of~$\K$ is \emph{aligned} if the points~$\{e^{i\theta_k}\}_{k\in \K}$
belong to a line passing through the origin of~$\C$, that is,
any two phases differ by~$0$ or~$\pi$. If all the phases are equal
we say that~$\K$ is~\emph{synchronized}.
\end{definition}

Unlike balanced configurations,
the aligned configuration of~$\V(\G)$ is an equilibrium for every~$\G$.
We refer to these configurations as aligned equilibria.
There are~$2^{n-1}$ non-equivalent aligned equilibria,
each corresponding to a~$1$-dimensional torus.
As we will see in a later section, some of them may be part of larger component.
Notice that there is only one synchronized equilibrium up to phase-shift.

\begin{proposition}
Let~$\G$ be connected. Then the only stable aligned equilibria of~$\G$
are the synchronized equilibria.
\end{proposition}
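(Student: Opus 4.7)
The plan is to reduce the statement to the characterization recalled in Section~\ref{sec:tools}: in a real analytic gradient system, the Lyapunov stable equilibria coincide with the local minima of the energy function. It then suffices to show that every aligned equilibrium which is not synchronized fails to be a local minimum of~$E$. We will do this by exhibiting an explicit direction of negative curvature for the Hessian $H$ of $E$ at such an equilibrium.

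The natural test vector is the sign vector $\sigma\in\{\pm 1\}^n$ encoding the aligned configuration: fixing a reference phase $\theta_0$, set $\sigma_j=+1$ if $\theta_j=\theta_0$ and $\sigma_j=-1$ if $\theta_j=\theta_0+\pi$. At such a configuration $\cos(\theta_k-\theta_j)=\sigma_j\sigma_k$, so by~\eqref{eq:jacobian} the Hessian reads $H_{jk}=-a_{jk}\sigma_j\sigma_k$ for $j\neq k$ and $H_{jj}=\sum_k a_{jk}\sigma_j\sigma_k$. A short computation using $\sigma_j^2=1$ collapses $\sigma^T H\sigma$ into a negative multiple of the number of edges whose endpoints carry opposite signs.

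Now we invoke connectivity. If the equilibrium is not synchronized then $\sigma$ takes both values $+1$ and $-1$, and connectedness of $\G$ forces the existence of at least one edge with endpoints of opposite sign; hence $\sigma^T H\sigma<0$. The phase-shift direction $(1,\ldots,1)$ lies in $\ker H$, and $\sigma$ is not a scalar multiple of it, so the orthogonal projection of $\sigma$ onto the complement of $(1,\ldots,1)$ is nonzero and inherits the strict negativity of the quadratic form. Thus $H$ has a negative eigenvalue transversal to the orbit, the equilibrium is not a local minimum of $E$, and by the analyticity result cited in Section~\ref{sec:tools} it is not Lyapunov stable.

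We expect no serious obstacle. The only nontrivial ingredient is computing the Hessian at an aligned configuration, which is immediate from $\cos(\theta_k-\theta_j)=\sigma_j\sigma_k$. The role of connectivity is simply to guarantee at least one edge between sign classes, and the remaining bookkeeping---namely, discarding the trivial phase-shift direction---is standard linear algebra.
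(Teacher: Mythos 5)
Your proposal is correct and follows essentially the same route as the paper: both reduce to the local-minimum criterion for real analytic gradient systems and show that moving the two sign classes relative to one another decreases the energy, with connectivity supplying the required cross edge; the paper computes the energy difference exactly, namely $E(\theta)-E(\tilde\theta)=\abs{\E(\J,\K)}\p{\cos(\pi+x)-\cos(\pi)}$, where you compute the Hessian quadratic form $\sigma^{T}H\sigma=-4\abs{\E(\J,\K)}$, which is just the second-order version of the same calculation. The only omission is the (easy) converse half that the paper does include---synchronized equilibria are stable because they are global minima of $E$---and, incidentally, your projection onto the complement of $(1,\ldots,1)$ is unnecessary, since any direction of strict energy decrease already rules out a local minimum.
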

\begin{proof}
Let~$\theta$ be an aligned equilibrium. The vertices~$\V(\G)$ can be partitioned into
two subsets~$\J$ and~$\K$ such that every vertex in~$\J$ have phase~$\alpha$
and every vertex in~$\K$ have phase~$\alpha+\pi$, for some~$\alpha\in\T$.
Let~$\E(\J,\K)$ denote
the set of edges between~$\J$ and~$\K$ and~$\abs{\E(\J,\K)}$ the cardinality of this set.

Suppose that~$\theta$ is not a synchronized equilibrium, then~$\J$ and~$\K$ are non-empty.
Since~$\G$ is connected the set~$\E(\J,\K)$ is non-empty.
Perturb~$\theta$ as~$\tilde\theta_k = \theta_k + x$
for~$k\in \K$ and~$\tilde\theta_j = \theta_j$ for~$j\in \J$. We claim that
that the energy~\eqref{eq:energy} decreases in the direction of the perturbation.
Indeed
\begin{align*}
	E(\theta) - E(\tilde\theta) & =
		\sum_{jk \in \E(\G)}
		\cos(\tilde\theta_j - \tilde\theta_k) - \cos(\theta_j - \theta_k) \\
		& = \sum_{jk \in \E(\J,\K)}
		\cos(\tilde\theta_j - \tilde\theta_k) - \cos(\theta_j - \theta_k) \\
		& = \abs{\E(\J,\K)} (\cos(\pi + x) - \cos(\pi))
\end{align*}
and since~$\abs{\mathcal E(\J,\K)}$ is positive
the difference~$E(\theta) - E(\tilde\theta)$ is positive for every~$x$ small enough.
The equilibrium~$\theta$ is not a local minimum, and thus it is not stable.

It remains to prove that the synchronized equilibria are stable. This follows from the fact that,
for every graph~$\G$, synchronized equilibria
are exactly the global minima of the energy~\eqref{eq:energy}.
\end{proof}

\subsection{Complete Bipartite Graphs.} \label{sec:bipartite}
In order to show how balanced configurations and aligned configurations may interplay,
we discuss complete bipartite graphs.
In~\cite{canale2009characterization} it is shown that the synchronized configurations
are the only stable equilibria in complete bipartite graph.
Here we complete the analysis by classifying all the possible equilibria, with a focus
on their geometric structure.
In particular, we see that balanced configurations lead to manifolds of unstable equilibria
of arbitrarily large dimension.

\begin{proposition}~\label{prop:bipartite}
Let~$\G$ be complete bipartite with non-empty parts~$\J$ and~$\K$.
The equilibria of~$\G$ have the following form:
\begin{enumerate} [label = (\roman*)]
\item $\theta_\J$~is balanced,~$\theta_\K$ is balanced; \label{prop:bipartite:1}
\item $\theta_\J$~is not balanced, $\theta_\K$ is balanced
and aligned to~$\sum_{j\in\J} e^{i\theta_j}$; \label{prop:bipartite:2}
\item $\theta_\K$~is not balanced, $\theta_\J$ is balanced
and aligned to~$\sum_{j\in\K} e^{i\theta_k}$; \label{prop:bipartite:3}
\item $\theta$ is aligned. \label{prop:bipartite:4}
\end{enumerate}
\end{proposition}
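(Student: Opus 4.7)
The plan is to rewrite the equilibrium condition using the order parameters
\[
	S_\J = \sum_{j\in \J} e^{i\theta_j}, \qquad S_\K = \sum_{k\in \K} e^{i\theta_k},
\]
and then perform a case split according to whether each of $S_\J$ and $S_\K$ vanishes. Since $\G$ is complete bipartite, the equilibrium condition at $j\in \J$ is
\[
	\sum_{k\in \K} \sin(\theta_k-\theta_j) = \mathrm{Im}\bigl(e^{-i\theta_j} S_\K\bigr) = 0,
\]
and symmetrically for $k\in \K$. Thus for every $j\in \J$ either $S_\K = 0$ or $S_\K$ is a real multiple of $e^{i\theta_j}$; and for every $k\in \K$ either $S_\J=0$ or $S_\J$ is a real multiple of $e^{i\theta_k}$. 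This reformulation is the only genuine piece of work; the rest is bookkeeping.

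Next I would split into four cases. If $S_\J=0$ and $S_\K=0$ both parts are balanced, giving \ref{prop:bipartite:1}. If $S_\J\neq 0$ and $S_\K = 0$, then $\theta_\K$ is balanced and for every $k\in \K$ the point $e^{i\theta_k}$ must lie on the real line through $S_\J$, which says exactly that $\theta_\K$ is aligned to $S_\J$, giving \ref{prop:bipartite:2}. The case $S_\J=0$, $S_\K\neq 0$ is symmetric and gives \ref{prop:bipartite:3}.

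The remaining case $S_\J\neq 0$, $S_\K\neq 0$ is where a small extra step is needed. Here every $e^{i\theta_j}$ is a real multiple of $S_\K$, so all phases in $\J$ are aligned to $\arg(S_\K)$; likewise all phases in $\K$ are aligned to $\arg(S_\J)$. Since $S_\J$ is then a real multiple of $e^{i\arg(S_\K)}$, we conclude $\arg(S_\J)\equiv \arg(S_\K)\pmod{\pi}$, and therefore all phases in $\J\cup \K$ lie on the same line through the origin. This is precisely \ref{prop:bipartite:4}.

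The conceptual obstacle, mild though it is, lies in this last case: one has to notice that the two a priori independent ``alignment axes'' for $\J$ and for $\K$ are forced to coincide modulo $\pi$ by the joint consistency of $S_\J$ and $S_\K$. Once this is observed, the proposition follows immediately from the reformulation in terms of $S_\J$ and $S_\K$ and the four-way case split.
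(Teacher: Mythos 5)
Your proof is correct and follows essentially the same route as the paper's: both rewrite the equilibrium condition at a vertex of one part as $\mathrm{Im}\p{e^{-i\theta}S}=0$ for the order parameter $S$ of the other part, and then case-split on which of $S_\J$, $S_\K$ vanish (the paper merely organizes this slightly differently, normalizing $S_\J$ to be real and positive by phase shift rather than using your symmetric four-way split). The only thing you omit is the easy converse --- that every configuration of the four listed forms is indeed an equilibrium --- which the paper dispatches in one line and which the proposition, read as a characterization, technically requires.
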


\begin{proof}
It is easy to see that every configuration of~$\V(\G)$ satisfying
any of the conditions~\ref{prop:bipartite:1} through~\ref{prop:bipartite:4}
is an equilibrium.

Conversely, let~$\theta$ be an equilibrium. We prove that~$\theta$ satisfies at least one
of the conditions. We have
\begin{align}
	& \Im \p{\sum_{k\in \K} e^{i\theta_k} e^{-i\theta_j}} = 0,
		\qquad \text{for all } j\in\J; \label{prop:bipartite:5} \\
	& \Im \p{\sum_{j\in \J} e^{i\theta_j} e^{-i\theta_k}} = 0,
		\qquad \text{for all } k\in\K . \label{prop:bipartite:6}
\end{align}

If~$\J$ and~$\K$ are both balanced we are in case~\ref{prop:bipartite:1}.
Suppose that~$\J$ is not balanced. Due to the phase-shift symmetry, we can assume
that~$\sum_{j\in \J} e^{i\theta_j}$ is
real and strictly positive. From~\eqref{prop:bipartite:6}
it follows that~$\theta_k\in\{0,\pi\}$ for every~$k\in \K$.
Then from~\eqref{prop:bipartite:5} it follows that either~$\K$ has an even number of vertices,
half with phase~$0$ and half with phase~$\pi$, and we are in case~\ref{prop:bipartite:2},
or~$\theta_j\in\{0,\pi\}$ for every~$j\in \J$, and we are in case~\ref{prop:bipartite:4}.
Similarly, if~$\K$ is not balanced
we are in case~\ref{prop:bipartite:3} or in case~\ref{prop:bipartite:4}.
\end{proof}

Let~$\abs{\J} = n$ and $\abs{\K} = m$. Suppose that~$n,m\geq 3$.
By Theorem~\ref{thm:Y.Kamiyama}
the set of equilibria of the form~\ref{prop:bipartite:1} has dimension~$(n-2)(m-2)$.
The set of equilibria of the form~\ref{prop:bipartite:2} has dimension~$n$ if~$m$ is even
and is empty otherwise. Similarly,
the set of equilibria of the form~\ref{prop:bipartite:3} has dimension~$m$ if~$n$ is even
and is empty otherwise. The aligned configurations~\ref{prop:bipartite:2}
form a set of dimension~$1$.

Notice that the cases~\ref{prop:bipartite:1} through~\ref{prop:bipartite:4} are not disjoint
and the conditions~\ref{prop:bipartite:2} and~\ref{prop:bipartite:3} are not closed.
Therefore, in order to obtain the decomposition in irreducible components
some more work is needed. In the next section we give an explicit example.


\section{Manifolds Connected by Heteroclinic Orbits}~\label{sec:heteroclinic}
A \emph{heteroclinic orbit} is a solution joining two equilibria.
We know from Section~\ref{sec:tools} that every solution of~\eqref{eq:main}
is either an equilibrium or a heteroclinic orbit
connecting two irreducible sets of equilibria.
Given a graph, we are led to the following program:
\begin{itemize}
\item computing the irreducible decomposition
of the set of equilibria;
\item establishing how the components are connected by solutions.
\end{itemize}
As we will see by two examples, even for small graphs this program may lead to some
intricate topological structures.

\subsection{Complete graph with $4$ vertices.}
Consider a complete graphs with vertices $\V(\G) = \{1, 2, 3, 4\}$.
The only equilibria are
the balanced configurations of~$\V(\G)$ and the aligned
configurations of~$\V(\G)$. As explained in Section~\ref{sec:balanced}
the balanced configurations of four vertices are arranged into three
$2$-dimensional tori:
\begin{align*}
B_1&=\{(\alpha,\alpha+\pi,\beta,\beta+\pi) \mid \alpha,\beta\in\T\}, \\
B_2&=\{(\alpha,\beta,\alpha+\pi,\beta+\pi) \mid \alpha,\beta\in\T\}, \\
B_3&=\{(\alpha,\beta,\beta+\pi,\alpha+\pi) \mid \alpha,\beta\in\T\}.
\end{align*}
These tori mutually intersect at aligned balanced configurations. Up to phase-shift
the balanced configurations form three mutually tangent circles.
There are only three aligned balanced configurations up to phase-shift.
The energy of the balanced configurations is~$E=8$, which is the maximum of the system.

The aligned configurations that are not balanced are
\begin{align*}
& A_1 = \{(\alpha, \alpha+\pi, \alpha+\pi, \alpha+\pi) \mid \alpha \in \T\}, \\
& A_2 = \{(\alpha +\pi, \alpha, \alpha+\pi, \alpha+\pi) \mid \alpha \in \T\}, \\
& A_3 = \{(\alpha +\pi, \alpha+\pi, \alpha, \alpha+\pi) \mid \alpha \in \T\}, \\
& A_4 = \{(\alpha +\pi, \alpha+\pi, \alpha+\pi, \alpha) \mid \alpha \in \T\},
\end{align*}
with energy~$E=6$, and the synchronized state
\[
	S = \{(\alpha, \alpha, \alpha, \alpha) \mid \alpha \in \T\}
\]
with energy~$E=0$. By perturbing the phase of one vertex,
it is easy to see that the equilibria in~$A_1$, $A_2$, $A_3$, and~$A_4$ are not
local maxima nor local minima.

A heteroclinic connection is only possible from a set with higher
energy to a set with lower energy. Numerically, we see that any connection
allowed by the energy actually exists. The existence of an orbit between~$A_i$
and~$S$ is trivial since~$S$ is the global minimum of
the energy. In order to establish the existence of an orbit between~$A_i$
and~$B_j$,
we perturbed an equilibrium of~$A_i$ and applied the Runge–Kutta method RK4 with reversed time.
We obtain the heteroclinic structure
of Figure~\ref{fig:heteroclinic_complete}.

\begin{figure} [h]
\centering
\begin{tikzpicture} [scale=1]
\draw[] (-2,0) ellipse (2 and 1);
\draw[] (2,0) ellipse (2 and 1);
\draw[] (0,0) ellipse (4 and 1.6);
\fill (-3,-3) circle (0.05);
\fill (-1,-3) circle (0.05);
\fill (+1,-3) circle (0.05);
\fill (+3,-3) circle (0.05);
\fill (0,-4) circle (0.05);
\fill (0,0) circle (0.05);
\fill (-4,0) circle (0.05);
\fill (+4,0) circle (0.05);
\draw[->, dashed, shorten >= 5pt, shorten <= 0pt, line width = 0.1]
	(-2,-1) to [] (-3,-3);
\draw[->, dashed, shorten >= 3pt, shorten <= 0pt, line width = 0.1]
	(-2,-1) to [] (-1,-3);
\draw[->, dashed, shorten >= 2pt, shorten <= 0pt, line width = 0.1]
	(-2,-1) to [] (+1,-3);
\draw[->, dashed, shorten >= 3pt, shorten <= 0pt, line width = 0.1]
	(-2,-1) to [] (+3,-3);
\draw[->, dashed, shorten >= 3pt, shorten <= 0pt, line width = 0.1]
	(0,-1.6) to [] (-3,-3);
\draw[->, dashed, shorten >= 5pt, shorten <= 0pt, line width = 0.1]
	(0,-1.6) to [] (-1,-3);
\draw[->, dashed, shorten >= 5pt, shorten <= 0pt, line width = 0.1]
	(0,-1.6) to [] (+1,-3);
\draw[->, dashed, shorten >= 3pt, shorten <= 0pt, line width = 0.1]
	(0,-1.6) to [] (+3,-3);
\draw[->, dashed, shorten >= 3pt, shorten <= 0pt, line width = 0.1]
	(+2,-1) to [] (-3,-3);
\draw[->, dashed, shorten >= 2pt, shorten <= 0pt, line width = 0.1]
	(+2,-1) to [] (-1,-3);
\draw[->, dashed, shorten >= 3pt, shorten <= 0pt, line width = 0.1]
	(+2,-1) to [] (+1,-3);
\draw[->, dashed, shorten >= 5pt, shorten <= 0pt, line width = 0.1]
	(+2,-1) to [] (+3,-3);
\draw[->, dashed, shorten >= 3pt, shorten <= 0pt, line width = 0.1]
	(-2,-1) to [out = -90-45, in = 180, distance=120pt] (0,-4);
\draw[->, dashed, shorten >= 3pt, shorten <= 0pt, line width = 0.1]
	(0,-1.6) to [] (0,-4);
\draw[->, dashed, shorten >= 3pt, shorten <= 0pt, line width = 0.1]
	(+2,-1) to [out = -45, in = 0, distance=120pt] (0,-4);
\draw[->, dashed, shorten >= 3pt, shorten <= 3pt, line width = 0.1]
	(-3,-3) to [out = -45, in = 90, distance=10pt] (0,-4);
\draw[->, dashed, shorten >= 3pt, shorten <= 3pt, line width = 0.1]
	(-1,-3) to [out = -45, in = 90, distance=10pt] (0,-4);
\draw[->, dashed, shorten >= 3pt, shorten <= 3pt, line width = 0.1]
	(+1,-3) to [out = -90-45, in = 90, distance=10pt] (0,-4);
\draw[->, dashed, shorten >= 3pt, shorten <= 3pt, line width = 0.1]
	(+3,-3) to [out = -90-45, in = 90, distance=10pt] (0,-4);
\node [fill=white] at (-2, -1+0.1) {$B_1$};
\node [fill=white] at (+0, -1.6+0.2) {$B_2$};
\node [fill=white] at (+2, -1+0.1) {$B_3$};
\node [] at (-3-0.3, -3) {$A_1$};
\node [] at (-1-0.3, -3) {$A_2$};
\node [] at (+1+0.3, -3) {$A_3$};
\node [] at (+3+0.3, -3) {$A_3$};
\node [] at (0, -4-0.3) {$S$};
\end{tikzpicture}
\caption{Topology of the set of equilibria up to phase shift of the complete
graph with $4$~vertices.
A dotted arrow represents the existence of a heteroclinic orbit.}
\label{fig:heteroclinic_complete}
\end{figure}
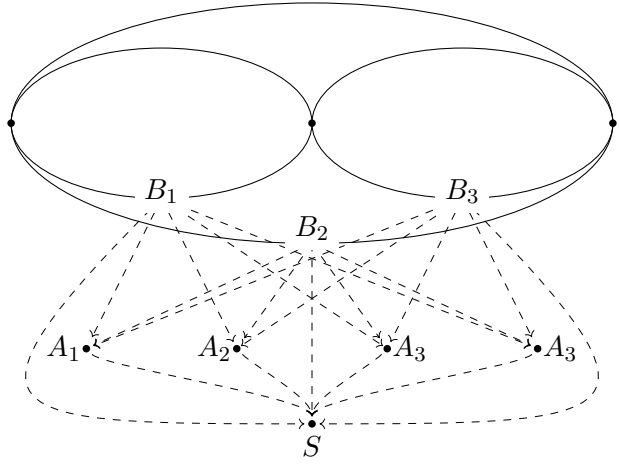

\subsection{Cycle with $4$ vertices.}
A cycle graph with vertices~$1$, $2$, $3$, $4$
is a complete bipartite graph with parts~$\J=\{1,3\}$
and~$\K=\{2,4\}$. Therefore, we can apply Proposition~\ref{prop:bipartite}.
Equilibria of the form~\ref{prop:bipartite:1}, in which each part is balanced,
constitute a $2$-dimensional torus
\[
	B_2 = \{(\alpha,\beta,\alpha+\pi,\beta+\pi) \mid \alpha,\beta\in \T\}
\]
with energy~$E=4$.
Equilibria of the form~\ref{prop:bipartite:2} and~\ref{prop:bipartite:3},
in which one part is generic and the other is balanced and aligned give
the $2$-dimensional tori
\begin{align*}
	C_1 &= \{(\beta-\delta, \beta, \beta+\delta , \beta+\pi)
		\mid \beta, \delta\in \T \}, \\
	C_2 &= \{(\alpha, \alpha+\delta , \alpha+\pi, \alpha-\delta)
		\mid \alpha, \delta\in \T \},
\end{align*}
with energy~$E=4$.
Notice that the conditions~\ref{prop:bipartite:2} and~\ref{prop:bipartite:3} are not closed,
here we are taking closures. It is easy to see that the equilibria in~$B_2$, $C_1$ and~$C_2$
are not local maxima nor local minima.

Notice that~$B_2$, $C_1$ and~$C_2$ contain two aligned equilibria each.
The remaining aligned equilibria are
\begin{align*}
	A_5 &= \{ (\alpha, \alpha+\pi, \alpha, \alpha+\pi) \mid \alpha \in \T\}, \\
	S &= \{ (\alpha, \alpha, \alpha, \alpha) \mid \alpha \in \T\},
\end{align*}
with energy~$E=8$ and~$E=0$ respectively.
Therefore, we obtain the heteroclinic network of Figure~\ref{fig:heteroclinic_cycle}.

Notice that~$B_2$, $C_1$ and~$C_2$ intersect in two $1$-dimensional tori.
The equilibria in the intersection have the property that all Jacobian eigenvalues are zero.
Equilibria with this property, known as completely degenerate, correspond to the Eulerian
circuits in the graph~\cite{sclosa2021completely}. In this case, the intersections
correspond to the two ways of traveling around the cycle.

\begin{figure} [h]
\centering
\begin{tikzpicture} [scale=1]
\draw (0,0) ellipse (1.5 and 1.5);
\draw (0,0) ellipse (2.5 and 1.5);
\draw (0,0) ellipse (3.5 and 1.5);
\fill (0,+2.5) circle (0.05);
\fill (0,-2.5) circle (0.05);
\fill (0,+1.5) circle (0.05);
\fill (0,-1.5) circle (0.05);
\draw[->, dashed, shorten >= 10pt, shorten <= 5pt, line width = 0.1]
	(0,2.5) to [out=-10, in=80, distance=30pt] (1.5,0);
\draw[->, dashed, shorten >= 10pt, shorten <= 5pt, line width = 0.1]
	(0,2.5) to [out=-10, in=80, distance=30pt] (2.5,0);
\draw[->, dashed, shorten >= 10pt, shorten <= 5pt, line width = 0.1]
	(0,2.5) to [out=-10, in=80, distance=30pt] (3.5,0);
\draw[->, dashed, shorten >= 5pt, shorten <= 5pt, line width = 0.1]
	(0,2.5) to [out=-170, in=170, distance=150pt] (0,-2.5);
\draw[<-, dashed, shorten <= 5pt, line width = 0.1]
	(0,-2.5) to [out=10, in=-90, distance=30pt] (1.5,0);
\draw[<-, dashed, shorten <= 5pt, line width = 0.1]
	(0,-2.5) to [out=10, in=-90, distance=30pt] (2.5,0);
\draw[<-, dashed, shorten <= 5pt, line width = 0.1]
	(0,-2.5) to [out=10, in=-90, distance=30pt] (3.5,0);
\node at (0.1, 2.8) {$A_5$};
\node at (0.1, -2.9) {$S$};
\node [{fill=white}] at (3.5, 0) {$C_2$};
\node [{fill=white}] at (2.5, 0) {$C_1$};
\node [{fill=white}] at (1.5, 0) {$B_2$};
\end{tikzpicture}
\caption{Topology of the set of equilibria up to phase shift of the cycle graph with $4$~vertices.
A dotted arrow represents the existence of a heteroclinic orbit.}
\label{fig:heteroclinic_cycle}
\end{figure}
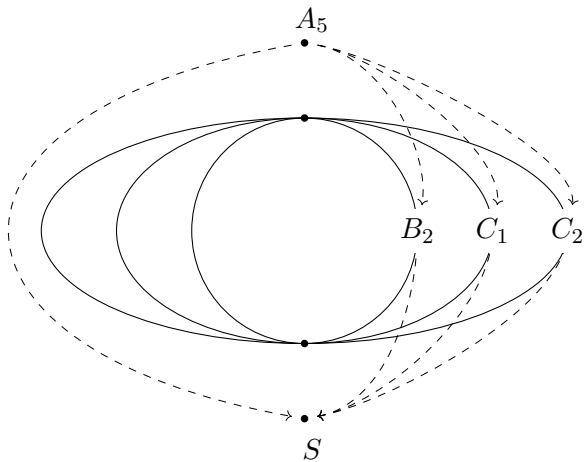

\bibliographystyle{siam}
\bibliography{refs}

\end{document}